\numberwithin{equation}{section}
\newtheorem{propn}{Proposition}[section]
\newtheorem{thm}[propn]{Theorem}
\newtheorem{lemma}[propn]{Lemma}
\newtheorem*{thm*}{Theorem}
\theoremstyle{definition}
\newtheorem{defn}[propn]{Definition}
 \newcommand{\D}{\mathbb{D}}
\newcommand{\clb}{\mathcal{B}}
\newcommand{\cle}{\mathcal{E}}
\newcommand{\clf}{\mathcal{F}}
\newcommand{\clq}{\mathcal{Q}}
\newcommand{\z}{\bm{z}}
\newcommand{\w}{\bm{w}}
\newcommand{\raro}{\rightarrow}
\newcommand{\NI}{\noindent}
\begin{document}
\title{ Toeplitz and Asymptotic Toeplitz operators on $H^2(\mathbb{D}^n)$ }

\author[Maji]{Amit Maji}
\address{Indian Statistical Institute, Statistics and Mathematics Unit, 8th Mile, Mysore Road, Bangalore, 560059, India}
\email{amaji\_pd@isibang.ac.in, amit.iitm07@gmail.com}

\author[Sarkar]{Jaydeb Sarkar}
\address{Indian Statistical Institute, Statistics and Mathematics Unit, 8th Mile, Mysore Road, Bangalore, 560059, India}
\email{jay@isibang.ac.in, jaydeb@gmail.com}

\author[Sarkar]{Srijan Sarkar}
\address{Indian Statistical Institute, Statistics and Mathematics Unit, 8th Mile, Mysore Road, Bangalore, 560059, India}
\email{srijan\_rs@isibang.ac.in, srijansarkar@gmail.com}
\subjclass[2010]{47B35, 47A13, 47A45, 30H10, 30H50, 47A20, 47B07,
15B05}

\keywords{Toeplitz operators, Hardy space over the polydisc,
vector-valued Hardy spaces, compact operators, quotient spaces,
model spaces}

\begin{abstract}

We initiate a study of asymptotic Toeplitz operators on the Hardy
space $H^2(\mathbb{D}^n)$ (over the unit polydisc $\mathbb{D}^n$ in
$\mathbb{C}^n$). We also study the Toeplitz operators in the
polydisc setting. Our main results on Toeplitz and  asymptotic
Toeplitz operators can be stated as follows: Let $T_{z_i}$ denote
the multiplication operator on $H^2(\mathbb{D}^n)$ by the $i^{th}$
coordinate function $z_i$, $i =1, \ldots, n$, and let $T$ be a
bounded linear operator on $H^2(\mathbb{D}^n)$. Then the following
hold:

(i) $T$ is a Toeplitz operator (that is, $T = P_{H^2(\mathbb{D}^n)}
M_{\varphi}|_{H^2(\mathbb{D}^n)}$, where $M_{\varphi}$ is the
Laurent operator on $L^{2}(\mathbb{T}^n)$ for some $\varphi \in
L^\infty(\mathbb{T}^n)$) if and only if $T_{z_i}^* T T_{z_i} = T$
for all $i = 1, \ldots, n$.

(ii) $T$ is an asymptotic Toeplitz operator if and only if $T =
\mbox{~Toeplitz} + \mbox{~compact}$.

\noindent The case $n = 1$ is the well known results of Brown and
Halmos, and Feintuch, respectively. We also present related results
in the setting of vector-valued Hardy spaces over the unit disc.
\end{abstract}


\maketitle

\section{Introduction}

Although concrete bounded linear operators on Hilbert spaces exist
in great variety and can exhibit interesting properties, one of the
main concerns of function theory and operator theory has generally
been the study of operators which are connected with the spaces of
holomorphic and integrable functions. The class of Toeplitz and
analytic Toeplitz operators have turned out to be one of the most
important classes of concrete operators from this point of view.

Toeplitz operators on the Hardy space (or, on the $l^2$ space) were
first studied by O. Toeplitz (and then by P. Hartman and A. Wintner
in \cite{Hart}). However, a systematic study of Toeplitz operators
was triggered by the seminal paper of Brown and Halmos \cite{BROW}
on algebraic properties of Toeplitz operators on $H^2(\mathbb{D})$.
Here $H^2(\mathbb{D})$ denote the Hardy space over the open unit
disc $\mathbb{D}$ in $\mathbb{C}$. The study of Toeplitz operators
on Hilbert spaces of holomorphic functions, like the Hardy space,
the Bergman space and the weighted Bergman spaces, on domains in
$\mathbb{C}^n$ is also one of the very active area of current
research that brings together several areas of mathematics. For more
information on this direction of research, we refer the reader to
\cite{Bot}, \cite{Choe}, \cite{DES}, \cite{XD}, \cite{Ron},
\cite{EK},  \cite{SD}, \cite{Up} and the references therein.

Recall that the well-known Brown-Halmos theorem characterizes
Toeplitz operators on $H^2(\mathbb{D})$ as follows (see the
matricial characterization, Theorem 6 in \cite{BROW}): Let $T$ be a
bounded linear operator on $H^2(\mathbb{D})$. Then $T$ is a Toeplitz
operator if and only if
\[
T_z^{*} T T_z = T.
\]

One of the main results of this paper is the following
generalization of Brown-Halmos theorem (see Theorem
\ref{thm-ToeplitzD}): A bounded linear operator $T$ on
$H^2(\mathbb{D}^n)$ is a Toeplitz operator if and only if
\[
T_{z_j}^* T T_{z_j} = T,
\]
for all $j = 1, \ldots, n$ (see Section 2 for notation and
background definitions).

The notion of Toeplitzness was extended to more general settings by
Barr\'{\i}a and Halmos \cite{BARR} and Feintuch \cite{FEN}. Also see
Popescu \cite{GP} for Toeplitzness in the non-commutative setting.
Accordingly, following Feintuch (and Barr\'{\i}a and Halmos
\cite{BARR}) we shall say that a bounded linear operator $T$ on
$H^2(\mathbb{D})$ is (uniformly) \textit{asymptotically Toeplitz} if
$\{T_z^{*m}T T_z^m \}_{m \geq 1}$ converges in operator norm. The
following theorem due to Feintuch \cite{FEN} gives a remarkable
characterization of asymptotically Toeplitz operators: A bounded
linear operator $T$ on $H^2(\mathbb{D})$ is asymptotically Toeplitz
if and only if $T = \mbox{~Toeplitz} + \mbox{~compact}$.

After a preliminary section (Section 2) on the Hardy space over unit
polydisc, in Section 3, we introduce the asymptotic Toeplitz
operators in polydisc setting (see Definition \ref{def:ATO}). In
Theorem \ref{th2}, we prove the following generalization of
Feintuch's theorem: A bounded linear operator $T$ on
$H^2(\mathbb{D}^n)$ is asymptotically Toeplitz  if and only if $T =
\mbox{~Toeplitz} + \mbox{~compact}$.

In Section 4, we investigate Toeplitzness and asymptotic
Toeplitzness of compressions of the $n$-tuple of multiplication
operators $(T_{z_1}, \ldots, T_{z_n})$ to Beurling type quotient
spaces of $H^2(\D^n)$. Note that a rich source of $n$-tuples of
commuting contractions comes from quotient Hilbert spaces of
$H^2(\D^n)$. More specifically, let $\theta \in H^\infty(\D^n)$ be
an inner function, that is, $|\theta| = 1$ on the distinguished
boundary $\mathbb{T}^n$ of $\D^n$. Set
\[
\mathcal{Q}_{\theta} = H^2(\mathbb{D}^n) \ominus \theta
H^2(\mathbb{D}^2),
\]
and
\[
C_{z_i} = P_{\mathcal{Q}_{\theta}} T_{z_i}|_{\mathcal{Q}_{\theta}},
\]
where $P_{\clq_{\theta}}$ denotes the orthogonal projection from
$H^2(\mathbb{D}^n)$ onto $\mathcal{Q}_{\theta}$. A basic question is
now to characterize those $T \in \mathcal{B}(\mathcal{Q}_{\theta})$
for which
\[
C_{z_i}^* T C_{z_i} = T.
\]
Similarly, characterize those $T \in
\mathcal{B}(\mathcal{Q}_{\theta})$ for which
\[
C_{z_i}^{*m} T C_{z_i}^m \raro A,
\]
in norm, for some $A \in \mathcal{B}(\clq_{\theta})$ and for all $i
= 1, \ldots, n$. In this general setting, to remedy the subtlety of
the product domain $\D^n$, we modify the above condition by adding
another natural condition. The main content of Section 4 is the
following: Let $T, A \in \clb(\clq_{\theta})$. Then $C_{z_i}^* A
C_{z_i} = A$ for all $i = 1, \ldots, n$, if and only if $A = 0$.
Moreover, the following are equivalent:

(i) $C_{z_i}^{*m} T C_{z_i}^m \raro A$ and $C_{z_i}^{*m} (T -A)
C_{z_j}^m \raro 0$ in norm for all $i, j = 1, \ldots, n$;

(ii) $C_{z_i}^{*m} T C_{z_i}^m \raro 0$ in norm for all $i = 1,
\ldots, n$;

(iii) $T$ is compact.

In Section 5, we study the above questions in the vector-valued
Hardy space over the unit disc setting. To be precise, let
$\mathcal{E}$ be a Hilbert space, and let $\Theta \in
H^{\infty}_{\mathcal{B}(\mathcal{E})}(\mathbb{D})$ be an inner
multiplier \cite{NAGY}. Then the \textit{model space} and the
\textit{model operator} are defined by $\clq_{\Theta} =
H^2_{\mathcal{E}}(\mathbb{D}) \ominus \Theta
H_{\mathcal{E}}^2(\mathbb{D})$ and $S_{\Theta} = P_{\clq_{\Theta}}
T_z|_{\clq_{\Theta}}$, respectively. We prove that for every $T \in
\mathcal{B}(\clq_{\Theta})$, the following holds: (i)
$S_{\Theta}^{*}TS_{\Theta}= T$ if and only if $T = 0$, and (ii)
$\{S_{\Theta}^{*m} T S_{\Theta}^m\}_{m\geq 1}$ converges in norm if
and only if $T$ is compact.

\section{Preliminaries}

Let $n \geq 1$ and $\mathbb{D}^n$ be the open unit polydisc in
$\mathbb{C}^n$. In the sequel, $\z$ will always denote a vector
$\z=\left( z_1, \ldots, z_n\right)$ in $\mathbb{C}^n$.

The \textit{Hardy space} $H^2(\mathbb{D}^n)$ over $\mathbb{D}^n$ is
the Hilbert space of all holomorphic functions $f$ on $\mathbb{D}^n$
such that
\[
\|f\|_{H^2(\mathbb{D}^n)}:= \left(\sup_{0\leq r< 1}
\int_{\mathbb{T}^n}|f(r e^{i \theta_1}, \ldots, r e^{i
\theta_n})|^2~d \bm{\theta} \right)^{\frac{1}{2}}< \infty,
\]
where $d\bm{\theta}$ is the normalized Lebesgue measure on the torus
$\mathbb{T}^n$, the distinguished boundary of $\mathbb{D}^n$. Let
$(T_{z_1}, \ldots, T_{z_n})$ denote the $n$-tuple of multiplication
operators by the coordinate functions $\{z_i\}_{i=1}^n$, that is,
\[
(T_{z_i} f)(\w) = w_i f(\w),
\]
for all $\w \in \D^n$ and $i = 1, \ldots, n$. We will often identify
$H^2(\D^n)$ with the $n$-fold Hilbert space tensor product of one
variable Hardy space as $H^2(\D) \otimes \cdots \otimes H^2(\D)$. In
this identification, $T_{z_i}$ can be represented as
\[
I_{H^2(\D)} \otimes \cdots \otimes \underbrace{T_z}\limits_{i^{th}~
place} \otimes \cdots \otimes I_{H^2(\D)},
\]
for all $i = 1, \ldots, n$. Also one can identify the Hardy space
(via the radial limits of functions in $H^2(\mathbb{D}^n)$) with the
closed subspace of $L^2(\mathbb{T}^n)$ in the following sense: Let
$\left \{ e_{\bm k} : \bm{k} \in \mathbb{Z}^{n} \right \}$ be the
orthonormal basis of $L^{2}({\mathbb{T}^n})$, where $\bm{k} = (k_1,
\ldots, k_n) \in \mathbb{Z}^{n}$ and $e_{\bm k} = e^{i \theta_1 k_1}
\cdots e^{i \theta_n k_n}$. Then a function
\[
\displaystyle\sum_{\bm{k} \in \mathbb{Z}^n} a_{\bm{k}} e_{\bm{k}}
\in L^2(\mathbb{T}^n),
\]
is the radial limit function of some function in $H^2(\mathbb{D}^n)$
if and only if $a_{\bm{k}} = 0$ whenever at least one of the $k_j$,
$j = 1, \ldots, n$, is negative. In particular, the set of all
monomials $\{\bm{z}^{\bm{k}} : \bm{k} \in \mathbb{Z}^n_+\}$ form an
orthonormal basis for $H^2(\mathbb{D}^n)$, where $\bm{k} = (k_1,
\ldots, k_n) \in \mathbb{Z}^n_+$ and $\bm{z}^{\bm{k}} = z_1^{k_1}
\cdots z_n^{k_n}$ (cf. \cite{Ahern}, \cite{Rudin}). We use
$P_{H^2(\mathbb{D}^n)}$ to denote the orthogonal projection from
$L^2(\mathbb{T}^n)$ onto $H^2(\mathbb{D}^n)$, that is,
\[
P_{H^2(\mathbb{D}^n)} (\sum_{\bm{k} \in \mathbb{Z}^n} a_{\bm{k}}
e_{\bm{k}}) = \sum_{\bm{k} \in \mathbb{Z}^n_+} a_{\bm{k}}
e_{\bm{k}},
\]
for all $\displaystyle\sum_{\bm{k} \in \mathbb{Z}^n} a_{\bm{k}}
e_{\bm{k}}$ in $L^2(\mathbb{T}^n)$.

For $\varphi \in L^{\infty}(\mathbb{T}^n)$, the \textit{Toeplitz
operator} with symbol $\varphi$ is the operator $T_{\varphi} \in
\mathcal{B}(H^2(\mathbb{D}^n))$ defined by
\[
T_{\varphi} f = P_{H^2(\mathbb{D}^n)} (M_{\varphi} f) \quad \quad (f
\in H^2(\mathbb{D}^n)),
\]
where $M_{\varphi}$ is the Laurent operator on $L^2(\mathbb{T}^n)$
defined by $M_{\varphi} g = \varphi g$ for all $g \in
L^2(\mathbb{T}^n)$. Therefore
\[
T_{\varphi} = P_{H^2(\mathbb{D}^n)}
M_{\varphi}|_{H^2(\mathbb{D}^n)}.
\]
For the relevant results on Toeplitz operators on
$H^2(\mathbb{D}^n)$ we refer the reader to \cite{Bot, Choe, XD, SD,
 NS} and references therein.

The following lemma will prove useful in what follows.

\begin{lemma}\label{sec1.lem1}
Let $\mathcal{H}$ be a Hilbert space and $A \in
\mathcal{B}(\mathcal{H})$ be a compact operator. If $R$ is a
contraction on $\mathcal{H}$, and if $R^{*m} \rightarrow 0$ in
strong operator topology, then $R^{*m}A \rightarrow 0$ in norm.
\end{lemma}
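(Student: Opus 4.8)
The plan is to use the standard fact that a compact operator can be approximated in norm by finite-rank operators, and that on a finite-rank operator the hypothesis $R^{*m}\to 0$ in SOT upgrades automatically to norm convergence. First I would fix $\varepsilon > 0$ and choose a finite-rank operator $F \in \mathcal{B}(\mathcal{H})$ with $\|A - F\| < \varepsilon$; this is possible because $A$ is compact. Since $R$ is a contraction, $\|R^{*m}\| \le 1$ for all $m$, so $\|R^{*m}A - R^{*m}F\| \le \|A - F\| < \varepsilon$ uniformly in $m$. Hence it suffices to show $R^{*m}F \to 0$ in norm.

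Write $F = \sum_{k=1}^{N} \langle \cdot, g_k\rangle h_k$ for vectors $g_1,\dots,g_N, h_1,\dots,h_N \in \mathcal{H}$. Then $R^{*m}F = \sum_{k=1}^{N} \langle \cdot, g_k\rangle (R^{*m} h_k)$, so for any unit vector $x \in \mathcal{H}$,
\[
\|R^{*m}F x\| \le \sum_{k=1}^{N} |\langle x, g_k\rangle| \, \|R^{*m} h_k\| \le \sum_{k=1}^{N} \|g_k\| \, \|R^{*m} h_k\|.
\]
Taking the supremum over unit vectors $x$ gives $\|R^{*m}F\| \le \sum_{k=1}^{N} \|g_k\| \, \|R^{*m} h_k\|$. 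By hypothesis $R^{*m} h_k \to 0$ in $\mathcal{H}$ for each $k$ (strong operator convergence applied to the fixed vector $h_k$), and the sum is finite, so $\|R^{*m}F\| \to 0$ as $m \to \infty$.

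Combining the two estimates: for $m$ large enough that $\|R^{*m}F\| < \varepsilon$, we get $\|R^{*m}A\| \le \|R^{*m}(A-F)\| + \|R^{*m}F\| < 2\varepsilon$. Since $\varepsilon > 0$ was arbitrary, $R^{*m}A \to 0$ in norm. I do not anticipate a genuine obstacle here; the only point requiring a little care is that the finite-rank approximation must be done once and held fixed while $m \to \infty$, so that the $\varepsilon$-term is uniform in $m$ and only the (fixed) finite-rank term needs the strong-convergence hypothesis — a standard $\varepsilon/2$ interchange-of-limits argument.
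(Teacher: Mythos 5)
Your proof is correct and complete: the finite-rank approximation of the compact operator, the uniform-in-$m$ bound $\|R^{*m}(A-F)\|\le\|A-F\|$ from the contraction hypothesis, and the norm convergence $\|R^{*m}F\|\to 0$ for a fixed finite-rank $F$ together give exactly the standard $\varepsilon$-argument. The paper itself does not prove the lemma but only cites it as a special case of B\"ottcher--Silbermann, 1.3(d); your argument is precisely the standard proof of that cited fact, so nothing further is needed.
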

\begin{proof}
This is a particular case of (\cite{Bot}, 1.3 (d), page 3).
\end{proof}

In what follows, for each $\bm{k} \in \mathbb{Z}^n_+$ and $\bm{l}
\in \mathbb{Z}^n$, we write $T_{\bm z}^{\bm{k}} = T_{z_1}^{k_1}
\cdots T_{z_n}^{k_n}$, $ M_{e^{i \theta}}^{{{\bm{l}}}} =
M_{e^{i\theta_1}}^{l_1} \cdots M_{e^{i\theta_n}}^{l_n}$, $T_{\bm
z}^{* \bm{k}} = T_{z_1}^{*k_1} \cdots T_{z_n}^{*k_n}$ and $ M_{e^{i
\theta}}^{*{{\bm{l}}}} = M_{e^{i\theta_1}}^{*l_1} \cdots
M_{e^{i\theta_n}}^{*l_n}$.

\section{Toeplitz operators in several variables}

In the following we prove a generalization of Brown and Halmos
characterization \cite{BROW} of Toeplitz operators on
$H^2(\mathbb{D})$. This result should be compared with the algebraic
characterization of Guo and Wang \cite{Guo} which states that $T$ in
$\mathcal{B}(H^2(\mathbb{D}^n))$ is a Toeplitz operator if and only
if $T_{\varphi}^* T T_{\varphi} = T$ for all inner function $\varphi
\in H^\infty(\mathbb{D}^n)$.

\begin{thm}\label{thm-ToeplitzD}
Let $T \in \mathcal B\left(H^{2}(\mathbb{D}^n) \right)$. Then $T$ is
a Toeplitz operator if and only if $T_{z_j}^{*}T T_{z_j} = T$ for
all $j = 1, \ldots n$.
\end{thm}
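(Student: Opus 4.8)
The plan is to reduce to the Brown--Halmos theorem in one variable by an inductive/tensor argument, but since the clean induction is not completely obvious let me instead describe a direct approach that works in $n$ variables at once. The "only if" direction is a routine computation: if $T = T_\vp = P M_\vp|_{H^2}$, then since $T_{z_j} = P M_{z_j}|_{H^2}$ and $M_{z_j}$ commutes with $M_\vp$, a short calculation using $M_{z_j}^* P = P M_{z_j}^* $ on the appropriate pieces (equivalently, $T_{z_j}^* P M_\vp = P M_{z_j}^* M_\vp = P M_\vp M_{z_j}^*$) gives $T_{z_j}^* T_\vp T_{z_j} = T_\vp$ for each $j$. So the content is the converse.

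For the converse, assume $T_{z_j}^* T T_{z_j} = T$ for all $j = 1,\dots,n$. First I would record the consequence that for every multi-index $\bm{k} \in \mathbb{Z}^n_+$ one has $T_{\bm z}^{*\bm k} T T_{\bm z}^{\bm k} = T$, by iterating and using that the $T_{z_j}$ commute. The natural candidate for the symbol is obtained as follows: define a doubly-indexed family of numbers by testing against monomials, i.e. for $\bm p, \bm q \in \mathbb{Z}^n_+$ consider $\langle T \bm z^{\bm q}, \bm z^{\bm p}\rangle$, and show that this depends only on $\bm p - \bm q$; concretely, $\langle T \bm z^{\bm q + \bm k}, \bm z^{\bm p + \bm k}\rangle = \langle T_{\bm z}^{*\bm k} T T_{\bm z}^{\bm k} \bm z^{\bm q}, \bm z^{\bm p}\rangle = \langle T \bm z^{\bm q}, \bm z^{\bm p}\rangle$ for all $\bm k \in \mathbb{Z}^n_+$, so the matrix of $T$ in the monomial basis is "Toeplitz" (constant along diagonals indexed by $\mathbb{Z}^n$). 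Set $a_{\bm l} = \langle T \bm z^{\bm q}, \bm z^{\bm q + \bm l}\rangle$ whenever $\bm q, \bm q + \bm l \in \mathbb{Z}^n_+$ (well-defined by the previous step), and let $\vp = \sum_{\bm l \in \mathbb{Z}^n} a_{\bm l} e_{\bm l}$ as a formal Fourier series on $\mathbb{T}^n$. Then by construction $T$ and $T_\vp$ have the same matrix in the orthonormal basis $\{\bm z^{\bm k}\}$, so once we know $\vp \in L^\infty(\mathbb{T}^n)$ we are done, because $T_\vp$ is then a genuine bounded Toeplitz operator agreeing with $T$.

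The main obstacle — exactly as in the classical Brown--Halmos argument — is showing that the formal symbol $\vp$ is actually an $L^\infty$ function (a priori it is only in $L^2$, or even only a formal series). Here the cleanest route is the one Brown and Halmos used: realize $T$ (equivalently $T_\vp$) as the compression to $H^2(\mathbb{D}^n)$ of the Laurent operator $M_\vp$ on $L^2(\mathbb{T}^n)$ by a limiting/dilation argument. Concretely, using the relation $T = T_{\bm z}^{*\bm k} T T_{\bm z}^{\bm k}$ one checks that the operators $M_{e^{i\theta}}^{-\bm k} T P_{H^2} M_{e^{i\theta}}^{\bm k}$ on $L^2(\mathbb{T}^n)$ (extending $T$ by zero off $H^2(\mathbb{D}^n)$, where $M_{e^{i\theta}}^{\bm k}$ is the unitary multiplication by $e^{i\bm k \cdot \bm\theta}$) form an increasing-in-$\bm k$ coherent family whose limit in the strong operator topology is a bounded operator $\widetilde{M}$ on $L^2(\mathbb{T}^n)$ with $\|\widetilde M\| \le \|T\|$, and which commutes with every $M_{e^{i\theta_j}}$; hence $\widetilde M = M_\vp$ for some $\vp \in L^\infty(\mathbb{T}^n)$ with $\|\vp\|_\infty = \|\widetilde M\| \le \|T\|$, and compressing back to $H^2(\mathbb{D}^n)$ recovers $T = T_\vp$. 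The one genuinely $n$-variable point to be careful about is the coherence of the family across the partial order on $\mathbb{Z}^n_+$ (one must use all $n$ relations $T_{z_j}^* T T_{z_j} = T$ simultaneously, not just a single shift), but this is exactly what the commuting hypothesis on $\{T_{z_j}\}$ delivers, so no new difficulty arises beyond bookkeeping.
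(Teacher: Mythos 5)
Your proposal is correct and follows essentially the same route as the paper: iterate the relations to see the matrix of $T$ is constant along diagonals, form the dilated operators $M_{e^{i\theta}}^{*\bm k}TP_{H^2(\mathbb{D}^n)}M_{e^{i\theta}}^{\bm k}$, pass to a limit bounded by $\|T\|$ that commutes with every $M_{e^{i\theta_j}}$, identify it as $M_\varphi$ with $\varphi\in L^\infty(\mathbb{T}^n)$, and compress back. The only (immaterial) differences are that the paper sidesteps your ``coherence over the partial order'' bookkeeping by taking the limit along the cofinal diagonal sequence $\bm k_d=(k,\dots,k)$, and it only needs the weak limit (via a bounded bilinear form) rather than the strong operator limit you assert.
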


\begin{proof}
For each $k\in \mathbb{Z}_+$, define ${{\bm{k}}_d} \in \mathbb{Z}^n_+$
by ${{\bm{k}}_d}=(k,\ldots,k)$. From $T_{z_j}^{*}T T_{z_j} = T$, $j
= 1, \ldots n$, we obtain that
\[
T_{\bm z}^{* \bm{k}_d} T T_{\bm z}^{\bm{k}_d} = T,
\]
which implies that
\[
\begin{split}
\langle T e_{\bm {i} +\bm{k}_{d}}, e_{\bm j + \bm k_{d}} \rangle & =
\langle T T_{\bm z}^{\bm{k}_d} e_{\bm {i}}, T_{\bm z}^{\bm{k}_d}
e_{\bm j} \rangle
\\
& = \langle T e_{\bm {i}}, e_{\bm {j}} \rangle,
\end{split}
\]
for all $k \in \mathbb{Z}_+$ and $\bm{i},\bm{j} \in \mathbb{Z}^n_+$.
Now for each $\bm{l}, \bm{m} \in \mathbb{Z}^n$, there exists $ \bm
t= (t_1, \ldots t_n) \in \mathbb{Z}^n_+$ such that $ \bm{l} +
{{\bm{k}}_d}, \bm{m} + {{\bm{k}}_d} \in \mathbb{Z}^n_+$ for all
${{\bm{k}}_d} \geq \bm t$ (that is, $k \geq t_j$ for all $j = 1,
\ldots, n$). Hence setting
\[
A_{k} = M_{e^{i \theta}}^{*{{\bm{k}}_d}} T P_{H^2(\mathbb{D}^n)}
M_{e^{i \theta}}^{{\bm{k}}_d},
\]
for each $k \geq 1$, we have
\[
\begin{split}
\langle A_{k}e_{\bm l}, e_{\bm m} \rangle_{L^2(\mathbb{T}^n)} & =
\langle T P_{H^2(\mathbb{D}^n)} M_{e^{i \theta}}^{{\bm{k}}_d} e_{\bm
l}, M_{e^{i \theta}}^{{\bm{k}}_d} e_{\bm m}
\rangle_{L^2(\mathbb{T}^n)}
\\
& = \langle TP_{H^2(\mathbb{D}^n)} e_{\bm {l}+ {\bm{k}}_d}, e_{\bm
{m}+{\bm{k}}_d} \rangle_{L^2(\mathbb{T}^n)},
\end{split}
\]
and therefore, for all ${{\bm{k}}_d} \geq \bm t$, we have that
\[
\begin{split}
\langle A_{k}e_{\bm l}, e_{\bm m} \rangle_{L^2(\mathbb{T}^n)} & =
\langle T e_{\bm{l}+ {{\bm{k}}_d}}, e_{\bm{m}+{{\bm{k}}_d}}
\rangle_{H^2(\mathbb{D}^n)}
\\
& = \langle T e_{\bm {l+ t }}, e_{\bm {m+t }}
\rangle_{H^2(\mathbb{D}^n)}.
\end{split}
\]
This implies in particular that
\[
\langle A_{k}e_{\bm l}, e_{\bm m} \rangle  \raro \langle T e_{\bm
{l+t}}, e_{\bm {m+t}}\rangle  \text{ as } k \raro \infty.
\]
Let the bilinear form $\eta$ on the linear span of $\left \{ e_{\bm
s} : \bm{s} \in \mathbb{Z}^{n} \right \}$ be defined by
\[
\eta (e_{\bm{l}},e_{\bm{m}})=\lim_{k\rightarrow \infty} \langle
A_{k}e_{\bm l}, e_{\bm m} \rangle,
\]
for all $\bm{l}, \bm{m} \in \mathbb{Z}^n$. Since $\|A_k \| \leq \|T
\|$, $k \geq 1$, it follows that $\eta$ is a bounded bilinear form.
Therefore, $\eta$ can be extended to a bounded bilinear form (again
denoted by $\eta$) on all of $L^2(\mathbb{T}^n)$, and hence there
exists a unique bounded linear operator $A_{\infty}$ on
$L^{2}(\mathbb{T}^n)$ such that
\[
\eta(f,g)=\langle A_{\infty}f,g \rangle = \lim_{k\rightarrow \infty}
\langle A_{k}f, g \rangle,
\]
for all $f, g \in L^2(\mathbb{T}^n)$. Now let $j \in \{1, \ldots,
n\}$, ${\bm l}, {\bm m} \in \mathbb{Z}^n$ and set
\[
\epsilon_j =(0, \ldots, \underbrace{1}_{j^{th} ~place}, \ldots, 0).
\]
Then for all $k$ sufficiently large (depending on ${\bm l}, {\bm m}$
and $j$ ), we have
\[
\begin{split}
\langle (M_{e^{i \theta}}^{*{{\bm{k}}_d}} T P_{H^2(\mathbb{D}^n)}
M_{e^{i \theta}}^{{{\bm{k}}_d}}) e_{{\bm l} + \epsilon_j}, e_{{\bm
m} + \epsilon_j} \rangle_{L^2(\mathbb{T}^n)} & =  \langle T
P_{H^2(\mathbb{D}^n)} e_{{\bm l+{{\bm{k}}_d}}+\epsilon_j}, e_{{\bm
m} +{{\bm{k}}_d}+\epsilon_j} \rangle_{L^2(\mathbb{T}^n)}
\\
& = \langle T e_{{\bm l+{{\bm{k}}_d}}+\epsilon_j}, e_{{\bm m}
+{{\bm{k}}_d}+\epsilon_j} \rangle_{H^2(\mathbb{D}^n)}
\\
& = \langle T_{z_j}^*TT_{z_j} e_{{\bm l+{{\bm{k}}_d}}}, e_{{\bm m}
+{{\bm{k}}_d}} \rangle_{H^2(\mathbb{D}^n)}
\\
& = \langle T e_{{\bm l+{{\bm{k}}_d}}}, e_{{\bm m} +{{\bm{k}}_d}}
\rangle_{H^2(\mathbb{D}^n)}
\\
& = \langle A_{k}e_{\bm l}, e_{\bm m} \rangle_{L^2(\mathbb{T}^n)}.
\end{split}
\]
Therefore
\[
\begin{split}
\langle A_{\infty} e_{{\bm l} + \epsilon_j}, e_{{\bm m} +
\epsilon_j} \rangle_{L^2(\mathbb{T}^n)} & = \lim_{k \rightarrow
\infty} \langle (M_{e^{i \theta}}^{*{{\bm{k}}_d}} T
P_{H^2(\mathbb{D}^n)} M_{e^{i \theta}}^{{{\bm{k}}_d}}) e_{{\bm l} +
\epsilon_j}, e_{{\bm m} + \epsilon_j} \rangle_{L^2(\mathbb{T}^n)}
\\
& = \langle A_{\infty} e_{\bm l}, e_{\bm m}
\rangle_{L^2(\mathbb{T}^n)},
\end{split}
\]
and consequently $M^*_{e^{i \theta_j}} A_{\infty} M_{e^{i \theta_j}}
= A_{\infty}$, that is, $A_{\infty} M_{e^{i \theta_j}} =
M_{e^{i\theta_j}} A_{\infty}$. Hence there exists $\varphi$ in
$L^{\infty}(\mathbb{T}^{n})$ such that $A_{\infty} = M_{\varphi}$
\cite{NAGY}. Finally, we note that for $f, g \in
H^{2}(\mathbb{D}^n)$,
\[
\begin{split}
\langle A_{\infty}f, g \rangle_{L^2(\mathbb{T}^n)} & = \lim_{k
\rightarrow \infty} \langle M_{e^{i \theta}}^{*{\bm k_d}} T
P_{H^2(\mathbb{D}^n)} M_{e^{i \theta}}^{\bm {k}_d} f, g
\rangle_{L^2(\mathbb{T}^n)}
\\
& = \lim_{ k \rightarrow \infty} \langle T_{\bm{z}}^{* {\bm k_d}}T
T_{\bm{z}}^{\bm{k}_d} f, g \rangle_{H^2(\mathbb{D}^n)},
\end{split}
\]
that is,
\[
\langle A_{\infty}f, g \rangle_{L^2(\mathbb{T}^n)} = \langle Tf, g
\rangle_{H^2(\mathbb{D}^n)},
\]
and hence
\[
\begin{split}
\langle P_{H^2(\mathbb{D}^n)} A_{\infty}f, g
\rangle_{H^2(\mathbb{D}^n)} & = \langle A_{\infty}f, g
\rangle_{L^2(\mathbb{T}^n)}
\\
& = \langle Tf, g \rangle_{H^2(\mathbb{D}^n)}.
\end{split}
\]
Therefore, $T = P_{H^2(\mathbb{D}^n)}
A_{\infty}|_{H^{2}(\mathbb{D}^n)}= P_{H^2(\mathbb{D}^n)}
M_{\varphi}|_{H^{2}(\mathbb{D}^n)}$, that is, $T$ is a Toeplitz
operator.

\noindent Conversely, let $\varphi \in L^{\infty}(\mathbb{T}^n)$ and
$T = P_{H^2(\mathbb{D}^n)}M_{\varphi}|_{H^{2}(\mathbb{D}^n)}$. Then
for $f, g \in H^{2}(\mathbb{D}^n)$ and $j = 1, \ldots n$, we have
\[
\begin{split}
\langle (T_{z_j}^* T T_{z_j})f, g \rangle_{H^2(\mathbb{D}^n)} & =
\langle {\varphi} e^{i \theta_j} f, e^{i \theta_j} g
\rangle_{L^2(\mathbb{T}^n)}
\\
& = \langle {\varphi} f, g \rangle_{L^2(\mathbb{T}^n)},
\end{split}
\]
that is,
\[
\langle (T_{z_j}^* T T_{z_j})f, g \rangle_{H^2(\mathbb{D}^n)} =
\langle P_{H^2(\mathbb{D}^n)}M_{\varphi}|_{H^2(\mathbb{D}^n)} f, g
\rangle_{H^2(\mathbb{D}^n)},
\]
and therefore $T_{z_j}^{*}TT_{z_j} = T$ for all $j = 1, \ldots n$,
as desired.
\end{proof}

We now characterize compact operators on $H^2(\mathbb{D}^n)$ in
terms of the multiplication operators $\{T_{z_1}, \ldots,
T_{z_n}\}$. This characterization was proved by Feintuch \cite{FEN}
in the case of $n = 1$.

\begin{thm}\label{th1}
A bounded linear map $T$ on $H^{2}(\mathbb{D}^n)$ is compact if and
only if $T_{z_i}^{*m}TT_{z_j}^{m} \rightarrow 0$ in norm for all
$i,j \in \lbrace1,....,n\rbrace$.
\end{thm}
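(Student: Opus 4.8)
The plan is to base everything on the finite-rank orthogonal projections
\[
R_m := \prod_{i=1}^{n}\bigl(I - T_{z_i}^{m} T_{z_i}^{*m}\bigr),
\]
which project $H^2(\D^n)$ onto $\Span\{\bm{z}^{\bm{k}} : 0\le k_i<m,\ 1\le i\le n\}$, so $\dim \Ran R_m = m^n<\infty$. Throughout I would use that each $T_{z_i}$ is an isometry with $T_{z_i}^{*m}\to 0$ in the strong operator topology — immediate from $T_{z_i}=I\otimes\cdots\otimes T_z\otimes\cdots\otimes I$ and $T_z^{*m}\to 0$ SOT on $H^2(\D)$ — together with Lemma \ref{sec1.lem1}. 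The forward direction is then quick: if $T$ is compact, Lemma \ref{sec1.lem1} applied to the contraction $T_{z_i}$ gives $T_{z_i}^{*m}T\to 0$ in norm, whence $\|T_{z_i}^{*m}TT_{z_j}^{m}\|\le\|T_{z_i}^{*m}T\|\to 0$ for all $i,j$.

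For the converse I would set $\delta_m := \max_{i,j}\|T_{z_i}^{*m}TT_{z_j}^{m}\|$, so $\delta_m\to 0$ by hypothesis; the main work is to promote this to the one-sided statements $\|T_{z_j}^{*m}T\|\to 0$ and $\|TT_{z_j}^{m}\|\to 0$ for each $j$. Since $\|TT_{z_j}^{m}\|=\|T_{z_j}^{*m}T^*\|$ and $T^*$ satisfies the same hypothesis, it suffices to treat the first. Write $P_j^{(m)}:=T_{z_j}^{m}T_{z_j}^{*m}$, so $\|T_{z_j}^{*m}T\|=\|P_j^{(m)}T\|\le\|P_j^{(m)}TR_m\|+\|P_j^{(m)}T(I-R_m)\|$. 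Inclusion--exclusion expands $I-R_m=\sum_{\emptyset\ne S\subseteq\{1,\dots,n\}}(-1)^{|S|+1}\bigl(\prod_{i\in S}T_{z_i}^{m}\bigr)\bigl(\prod_{i\in S}T_{z_i}^{*m}\bigr)$, and since $\|T_{z_j}^{*m}T\prod_{i\in S}T_{z_i}^{m}\|\le\|T_{z_j}^{*m}TT_{z_{i_0}}^{m}\|\le\delta_m$ for any $i_0\in S$, the term $\|P_j^{(m)}T(I-R_m)\|$ is $\le(2^n-1)\delta_m\to 0$. The term $\|P_j^{(m)}TR_m\|$ is the crux — $R_m$ varies with $m$, so Lemma \ref{sec1.lem1} does not apply directly — and I would handle it by a two-scale comparison with a fixed $R_{m'}$. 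For $m'\le m$ one has $R_{m'}\le R_m$, hence $R_m-R_{m'}=(I-R_{m'})R_m$; using $T_{z_j}^{*m}=T_{z_j}^{*(m-m')}T_{z_j}^{*m'}$ one gets the monotonicity bound $\|T_{z_j}^{*m}T\prod_{i\in S}T_{z_i}^{m'}\|\le\delta_{m'}$ for $m\ge m'$, and combining this with the inclusion--exclusion expansion of $I-R_{m'}$ yields $\|P_j^{(m)}T(R_m-R_{m'})\|\le(2^n-1)\delta_{m'}$ for $m\ge m'$. On the other hand $TR_{m'}$ has finite rank, so $\|P_j^{(m)}TR_{m'}\|=\|T_{z_j}^{*m}(TR_{m'})\|\to 0$ as $m\to\infty$ by Lemma \ref{sec1.lem1}. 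Splitting $TR_m=TR_{m'}+T(R_m-R_{m'})$, letting $m\to\infty$ and then $m'\to\infty$, gives $\|P_j^{(m)}TR_m\|\to 0$, hence $\|T_{z_j}^{*m}T\|\to 0$.

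Once $\|T_{z_j}^{*m}T\|\to 0$ and $\|TT_{z_j}^{m}\|\to 0$ for all $j$, the same inclusion--exclusion expansion gives
\[
\|T-R_mTR_m\|\le\|(I-R_m)T\|+\|R_mT(I-R_m)\|\le(2^n-1)\Bigl(\max_j\|T_{z_j}^{*m}T\|+\max_j\|TT_{z_j}^{m}\|\Bigr)\longrightarrow 0,
\]
and each $R_mTR_m$ has finite rank, so $T$ is a norm limit of finite-rank operators, hence compact. The hard part is the middle paragraph — upgrading $\|T_{z_i}^{*m}TT_{z_j}^{m}\|\to 0$ to $\|T_{z_j}^{*m}T\|\to 0$. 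For $n=1$ this is automatic because $I-T_z^{m}T_z^{*m}$ is already finite rank (this is essentially Feintuch's argument), whereas for $n\ge 2$ that projection has infinite rank, and it is exactly the two-scale comparison of $R_m$ against a fixed $R_{m'}$ — separating a genuinely compact piece $TR_{m'}$ from a remainder controlled uniformly by $\delta_{m'}$ — that salvages the estimate.
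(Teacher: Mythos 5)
Your proof is correct, but it takes a genuinely different and noticeably longer route than the paper's. The paper also builds the finite--rank projections $F_m=\prod_{i=1}^n(I-T_{z_i}^mT_{z_i}^{*m})$, but it chooses the finite--rank approximant $\tilde F_m=TF_m+F_mT-F_mTF_m$, for which the error is the \emph{two--sided} product $T-\tilde F_m=(I-F_m)T(I-F_m)$. Expanding both factors by inclusion--exclusion, every term of the resulting double sum contains a block $\bigl(\prod_{i\in S}T_{z_i}\bigr)^{*m}\,T\,\bigl(\prod_{j\in S'}T_{z_j}\bigr)^m$ sandwiched between contractions, and extracting one $T_{z_{i_0}}^{*m}$ on the left and one $T_{z_{j_0}}^{m}$ on the right bounds that term by $\|T_{z_{i_0}}^{*m}TT_{z_{j_0}}^{m}\|\le\delta_m$; the hypothesis (for \emph{all} pairs $i,j$, which is exactly why the mixed condition appears in the statement) then kills the whole sum at once. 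Your proof instead insists on the one--sided estimates $\|T_{z_j}^{*m}T\|\to0$ and $\|TT_{z_j}^{m}\|\to0$, which forces you to confront the term $\|P_j^{(m)}TR_m\|$ with $R_m$ growing in $m$, and hence to invent the two--scale comparison against a fixed $R_{m'}$ (splitting off the genuinely compact piece $TR_{m'}$ and controlling the remainder uniformly by $\delta_{m'}$). That argument is valid --- the monotonicity bound $\|T_{z_j}^{*m}TT_{z_{i_0}}^{m'}\|\le\delta_{m'}$ for $m\ge m'$ and the $\limsup$ in $m$ followed by $m'\to\infty$ all check out --- and it buys you strictly more information (the one--sided norm decay, which is not an immediate consequence of the hypothesis for $n\ge2$), but none of that extra strength is needed for compactness: the paper's choice of approximant makes the whole middle paragraph of your argument unnecessary. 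You correctly identify that for $n=1$ the difficulty disappears because $I-T_z^mT_z^{*m}$ is already finite rank; the paper's observation is that even for $n\ge2$ one never needs the one--sided statements if the error term is kept in the symmetric form $(I-F_m)T(I-F_m)$.
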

\begin{proof}
Let $T$ on $H^{2}(\mathbb{D}^n)$ be a bounded operator. First
observe that for each $m \geq 1$, we have
\[
T^m_z T_z^{*m} = I_{H^2(\mathbb{D})} - P_{\mathcal{F}_m},
\]
where
\[
\mathcal{F}_m = \mathbb{C} \oplus z \mathbb{C} \oplus \cdots \oplus
z^{m-1} \mathbb{C},
\]
is an $m$-dimensional subspace of $H^2(\mathbb{D})$. For each $m
\geq 1$, set
\[
F_{m} = \prod_{i=1}^{n}(I_{H^2(\mathbb{D}^n)} -T_{z_i}^{m}
T_{z_i}^{*m}).
\]
Then
\[
\begin{split}
F_{m} & = \prod_{i=1}^{n} (I_{H^2(\mathbb{D})} \otimes \cdots
\otimes \underbrace{{(I_{H^2(\mathbb{D})} - T_{z}^{m}
T_{z}^{*m})}}_{i^{th} place} \otimes \cdots \otimes
I_{H^2(\mathbb{D})})
\\
& = \prod_{i=1}^{n} (I_{H^2(\mathbb{D})} \otimes \cdots \otimes
\underbrace{P_{\mathcal{F}_{m}}}_{i^{th} place} \otimes \cdots
\otimes I_{H^2(\mathbb{D})})
\\
& = P_{\clf_{m}} \otimes \cdots \otimes P_{\clf_{m}},
\end{split}
\]
which gives that $F_{m}$ is a finite rank operator and hence
\[
\tilde{F}_{m} = T F_{m} + F_{m} T - F_{m} T F_{m},
\]
is a finite rank operator, $m \geq 1$. Moreover
\[
\begin{split}
T - \tilde{F}_{m} & = T - (T F_{m} + F_{m} T - F_{m} T F_{m})
\\
& = (I_{H^2(\mathbb{D}^n)} - F_{m}) T (I_{H^2(\mathbb{D}^n)} -
F_{m}).
\end{split}
\]
Finally, observe that
\[
\begin{split}
I_{H^2(\mathbb{D}^n)} - F_{m} & = \sum_{1 \leq i_1 < \cdots < i_l
\leq n}(-1)^{l + 1} T_{z_{i_1}}^{m} \cdots T_{z_{i_l}}^{m}
T_{z_{i_1}}^{* m} \cdots T_{z_{i_l}}^{* m}
\\
& = \sum_{1 \leq i_1 < \cdots < i_l \leq n}(-1)^{l + 1} (T_{z_{i_1}}
\cdots T_{z_{i_l}})^m (T_{z_{i_1}} \cdots T_{z_{i_l}})^{* m},
\end{split}
\]
for all $m \geq 1$. Hence, by hypothesis and the triangle inequality
we have
\[
\|T - \tilde{F}_{m} \| = \|(I_{H^2(\mathbb{D}^n)} - F_{m}) T
(I_{H^2(\mathbb{D}^n)} - F_{m})\| \raro 0,
\]
as $m \raro \infty$, that is, $T$ is a compact operator.

\noindent The converse follows from Lemma \ref{sec1.lem1}. This
completes the proof.
\end{proof}

In view of the preceding theorem, it seems reasonable to define
asymptotic Toeplitz operators as follows (compare this with Feintuch
\cite{FEN} and Barr\'{\i}a and Halmos \cite{BARR}):

\begin{defn}\label{def:ATO}
A bounded linear operator $T$ on $H^2(\mathbb{D}^n)$ is said to be
an asymptotic Toeplitz operator if there exists $A \in
\mathcal{B}(H^2(\mathbb{D}^n))$ such that $T_{z_i}^{*m} T
T_{z_i}^{m} \rightarrow A$ and $T_{z_i}^{*m}(T-A)T_{z_j}^{m}
\rightarrow 0$ as $m \rightarrow \infty$ in norm, $ 1 \leq i,j \leq
n$.
\end{defn}

We close this section by characterizing asymptotic Toeplitz
operators on $H^2(\mathbb{D}^n)$ as analogous characterization of
asymptotic Toeplitz operators on $H^2(\mathbb{D})$ (see \cite{FEN}
and also Theorem \ref{sec1.lem2} in Section 5).

\begin{thm}\label{th2}
Let $T$ be a bounded linear operator on $H^{2}(\mathbb{D}^n)$. Then
$T$ is an asymptotic Toeplitz operator if and only if $T$ is a
compact perturbation of Toeplitz operator.
\end{thm}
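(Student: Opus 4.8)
The plan is to establish both directions by reducing to the two theorems already proved in this section: Theorem~\ref{thm-ToeplitzD} (the Brown--Halmos characterization of Toeplitz operators via $T_{z_j}^*TT_{z_j}=T$) and Theorem~\ref{th1} (compactness via $T_{z_i}^{*m}TT_{z_j}^m\to 0$ in norm). For the easy direction, suppose $T=S+K$ with $S$ Toeplitz and $K$ compact. I would take $A:=S$. Then $T_{z_i}^{*m}TT_{z_i}^m = T_{z_i}^{*m}ST_{z_i}^m + T_{z_i}^{*m}KT_{z_i}^m = S + T_{z_i}^{*m}KT_{z_i}^m$, using that $T_{z_i}^{*}ST_{z_i}=S$ iterated. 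Since $T_{z_i}$ is a contraction with $T_{z_i}^{*m}\to 0$ in the strong operator topology (this holds because $T_z^{*m}\to 0$ strongly on $H^2(\mathbb{D})$ and the tensor factorization of $T_{z_i}$ given in Section~2), Lemma~\ref{sec1.lem1} gives $T_{z_i}^{*m}K\to 0$ in norm, hence $T_{z_i}^{*m}KT_{z_i}^m\to 0$ in norm; so $T_{z_i}^{*m}TT_{z_i}^m\to A$ in norm. Likewise $T_{z_i}^{*m}(T-A)T_{z_j}^m = T_{z_i}^{*m}KT_{z_j}^m\to 0$ in norm by the same lemma. Thus $T$ is asymptotically Toeplitz in the sense of Definition~\ref{def:ATO}.

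For the converse, suppose $T$ is asymptotically Toeplitz, with limit operator $A\in\mathcal{B}(H^2(\mathbb{D}^n))$ as in Definition~\ref{def:ATO}. The first claim is that $A$ is a Toeplitz operator. To see this I would show $T_{z_j}^*AT_{z_j}=A$ for each $j$, and then invoke Theorem~\ref{thm-ToeplitzD}. Starting from $T_{z_i}^{*m}TT_{z_i}^m\to A$ in norm, one has $T_{z_i}^{*(m+1)}TT_{z_i}^{m+1}\to A$ as well; writing $T_{z_i}^{*(m+1)}TT_{z_i}^{m+1}=T_{z_i}^*\bigl(T_{z_i}^{*m}TT_{z_i}^m\bigr)T_{z_i}$ and passing to the norm limit (using boundedness of $T_{z_i}$, $T_{z_i}^*$) yields $T_{z_i}^*AT_{z_i}=A$ for each $i$, hence by Theorem~\ref{thm-ToeplitzD} $A=T_\varphi$ for some $\varphi\in L^\infty(\mathbb{T}^n)$. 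It remains to show $T-A$ is compact. By Theorem~\ref{th1} it suffices to prove $T_{z_i}^{*m}(T-A)T_{z_j}^m\to 0$ in norm for all $i,j$; but this is \emph{exactly} the second condition built into Definition~\ref{def:ATO}. Hence $T-A$ is compact and $T=A+(T-A)$ is a compact perturbation of a Toeplitz operator.

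The main subtlety, and the reason the definition of asymptotic Toeplitz operator in the polydisc must carry the extra ``cross'' condition $T_{z_i}^{*m}(T-A)T_{z_j}^m\to 0$, is precisely that Theorem~\ref{th1} detects compactness only through these mixed double-sided compressions with possibly distinct indices $i\ne j$; convergence of $T_{z_i}^{*m}TT_{z_i}^m$ for each single $i$ alone does not force $T-A$ to be compact when $n\ge 2$ (unlike the case $n=1$, where the two conditions coincide). So the real content of this theorem is the observation that, once one grants the definition, both implications are immediate consequences of the two preceding theorems; no new estimate is needed beyond checking that the strong convergence hypothesis of Lemma~\ref{sec1.lem1} is met by $T_{z_i}$, which follows from the tensor-product description of $T_{z_i}$ in Section~2 together with the one-variable fact $T_z^{*m}\to 0$ strongly.
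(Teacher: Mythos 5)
Your proof is correct and follows essentially the same route as the paper: both directions are reduced to Theorem~\ref{thm-ToeplitzD}, Theorem~\ref{th1}, and Lemma~\ref{sec1.lem1}, with the identity $T_{z_j}^{*}AT_{z_j}=A$ extracted from the norm convergence exactly as in the paper's triangle-inequality argument. The only difference is that you write out the converse direction (which the paper leaves as a one-line remark) and add a correct explanatory comment on the role of the cross condition $T_{z_i}^{*m}(T-A)T_{z_j}^{m}\to 0$.
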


\begin{proof}
Let $A \in \mathcal{B}(H^2(\mathbb{D}^n))$, $T_{z_i}^{*m} T
T_{z_i}^{m} \rightarrow A$ and $T_{z_i}^{*m}(T-A)T_{z_j}^{m}
\rightarrow 0$ in norm, as $m \raro \infty$, and $1 \leq i,j \leq
n$. Then for all $m \geq 1$,
\[
\begin{split}
\|A - T_{z_j}^* A T_{z_j}\| & \leq \|A - T_{z_j}^{*(m+1)} T
T_{z_j}^{m+1}\| + \|T_{z_j}^{*(m+1)} T T_{z_j}^{m+1} - T_{z_j}^* A
T_{z_j}\|
\\
& \leq \|A - T_{z_j}^{*(m+1)} T T_{z_j}^{m+1}\| + \|T_{z_j}^{*m} T
T_{z_j}^{m} - A\|,
\end{split}
\]
yields $T_{z_j}^{*} A T_{z_j} = A$ for all $j = 1, \ldots, n$. Also
by Theorem \ref{th1}, $T-A$ is compact on $H^{2}(\mathbb{D}^n)$.

\noindent The converse follows from Lemma \ref{sec1.lem1} and
Theorem \ref{thm-ToeplitzD}. This completes the proof.
\end{proof}

\section{Quotient spaces of $H^2(\mathbb{D}^n)$}

The purpose of this section is to extend some of the results of
Section 3 in the case when the ambient operator is the compression
of $(T_{z_1}, \ldots, T_{z_n})$ to a joint $(T^*_{z_1}, \ldots,
T^*_{z_n})$-invariant subspace.

Let $\mathcal{Q}$ be a joint $(T^*_{z_1}, \ldots,
T^*_{z_n})$-invariant subspace of $H^2(\mathbb{D}^n)$. Set
\[
C_{z_i} = P_{\mathcal{Q}} T_{z_i}|_\mathcal{Q},
\]
for all $i = 1, \ldots, n$. Note that $\mathcal{Q}^\perp$ is a joint
invariant subspace for $(T_{z_1}, \ldots, T_{z_n})$ and so
\[
C_{z_i}^* = T^*_{z_i}|_{\mathcal{Q}} \in \mathcal{B}(\mathcal{Q}).
\]
In the case $n = 1$, $C_{z}$ is called a \textit{Jordan block}
\cite{NAGY}. In the several variables quotient space setting, we
have the following analogue of Theorem \ref{th2}.

\begin{thm}\label{quo.thm1}
Let $T, A \in \mathcal{B}(\mathcal{Q})$. Then $C_{z_i}^{*m} T C_{z_i}^m
\raro A$ and $C_{z_i}^{*m} (T -A) C_{z_j}^m \raro 0$ in norm for all
$i, j = 1, \ldots, n$ if and only if $T = A + K$, where $K \in
\mathcal{B}(\mathcal{Q})$ is a compact operator and $C_{z_i}^{*} A
C_{z_i} = A$ for all $i =1, \ldots n$.
\end{thm}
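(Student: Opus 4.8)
The plan is to run the arguments of Theorems~\ref{th1} and~\ref{th2} inside $\mathcal{Q}$, with $T_{z_i}$ replaced by $C_{z_i}$ and the finite-rank operators $F_m$ of Theorem~\ref{th1} replaced by their compressions to $\mathcal{Q}$. I would first record the structural facts used throughout: since $\mathcal{Q}^{\perp}$ is jointly invariant for $(T_{z_1},\dots,T_{z_n})$ it is invariant for every monomial $T_{z_{i_1}}\cdots T_{z_{i_l}}$, and hence, for any $S=\{i_1<\cdots<i_l\}\subseteq\{1,\dots,n\}$ and $m\ge 1$, one has $(C_{z_{i_1}}\cdots C_{z_{i_l}})^m=P_{\mathcal{Q}}(T_{z_{i_1}}\cdots T_{z_{i_l}})^m|_{\mathcal{Q}}$; moreover the $C_{z_i}$ commute, each is a contraction, and $C_{z_i}^{*m}=T_{z_i}^{*m}|_{\mathcal{Q}}\raro 0$ in the strong operator topology. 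Writing $C_S^m:=C_{z_{i_1}}^m\cdots C_{z_{i_l}}^m$ and $C_S^{*m}:=(C_S^m)^{*}$, this gives the compression formula $C_S^mC_S^{*m}=P_{\mathcal{Q}}(T_{z_{i_1}}\cdots T_{z_{i_l}})^m(T_{z_{i_1}}\cdots T_{z_{i_l}})^{*m}|_{\mathcal{Q}}$.

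For the ``if'' direction, suppose $T=A+K$ with $K$ compact and $C_{z_i}^{*}AC_{z_i}=A$ for all $i$. Iterating gives $C_{z_i}^{*m}AC_{z_i}^m=A$; together with $K=T-A$ this reduces both required convergences to showing $\|C_{z_i}^{*m}KC_{z_j}^m\|\raro 0$ for all $i,j$. Since $C_{z_i}^{*m}\raro 0$ strongly and $K$ is compact, Lemma~\ref{sec1.lem1} yields $\|C_{z_i}^{*m}K\|\raro 0$, whence $\|C_{z_i}^{*m}KC_{z_j}^m\|\le\|C_{z_i}^{*m}K\|\raro 0$.

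For the ``only if'' direction, assume the convergences hold. Running the estimate of Theorem~\ref{th2} verbatim with $C_{z_i}$ in place of $T_{z_i}$ (legitimate since $C_{z_i}$ is a contraction) produces $C_{z_i}^{*}AC_{z_i}=A$ for each $i$, so it remains to show that $K:=T-A$ is compact, knowing $C_{z_i}^{*m}KC_{z_j}^m\raro 0$ for all $i,j$. Here I would adapt the proof of Theorem~\ref{th1}: with $F_m=\prod_{i=1}^n(I_{H^2(\D^n)}-T_{z_i}^mT_{z_i}^{*m})=P_{\clf_m}\ot\cdots\ot P_{\clf_m}$ (finite rank) set $\widehat F_m:=P_{\mathcal{Q}}F_m|_{\mathcal{Q}}$, again finite rank; compressing the identity $I_{H^2(\D^n)}-F_m=\sum_{\emptyset\ne S}(-1)^{|S|+1}(T_{z_{i_1}}\cdots T_{z_{i_l}})^m(T_{z_{i_1}}\cdots T_{z_{i_l}})^{*m}$ to $\mathcal{Q}$ and using the compression formula gives $I_{\mathcal{Q}}-\widehat F_m=\sum_{\emptyset\ne S}(-1)^{|S|+1}C_S^mC_S^{*m}$. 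Then $\widetilde K_m:=K\widehat F_m+\widehat F_mK-\widehat F_mK\widehat F_m$ is finite rank and $K-\widetilde K_m=(I_{\mathcal{Q}}-\widehat F_m)K(I_{\mathcal{Q}}-\widehat F_m)$ expands into a finite sum of terms $\pm\,C_S^mC_S^{*m}KC_{S'}^mC_{S'}^{*m}$ over nonempty $S,S'$; bounding each by $\|C_{z_{i_0}}^{*m}KC_{z_{j_0}}^m\|$ for a choice of $i_0\in S$, $j_0\in S'$ (after rearranging the commuting factors) shows $\|K-\widetilde K_m\|\raro 0$, so $K$ is a norm limit of finite-rank operators, hence compact.

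The step I expect to be the main obstacle is exactly this passage from $F_m$ to $\widehat F_m=P_{\mathcal{Q}}F_m|_{\mathcal{Q}}$: unlike in $H^2(\D^n)$, the naive product $\prod_i(I_{\mathcal{Q}}-C_{z_i}^mC_{z_i}^{*m})$ need not even be finite rank (the individual factors are not), so one must instead compress the genuine finite-rank projection $F_m$ and verify that the result still satisfies the telescoping identity $I_{\mathcal{Q}}-\widehat F_m=\sum_{\emptyset\ne S}(-1)^{|S|+1}C_S^mC_S^{*m}$ --- which is precisely where joint $(T_{z_1},\dots,T_{z_n})$-invariance of $\mathcal{Q}^{\perp}$, through $C_S^m=P_{\mathcal{Q}}(T_{z_{i_1}}\cdots T_{z_{i_l}})^m|_{\mathcal{Q}}$, is needed.
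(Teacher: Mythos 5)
Your proof is correct, and two of its three components --- the ``if'' direction via Lemma \ref{sec1.lem1}, and the extraction of $C_{z_i}^{*}AC_{z_i}=A$ from the norm convergence by the triangle-inequality argument of Theorem \ref{th2} --- coincide with what the paper does. Where you genuinely diverge is in proving compactness of $K=T-A$: you rerun the finite-rank approximation scheme of Theorem \ref{th1} intrinsically inside $\mathcal{Q}$, replacing $F_m$ by the compression $\widehat F_m=P_{\mathcal{Q}}F_m|_{\mathcal{Q}}$ and verifying (correctly, via $C_{z_{i_1}}^m\cdots C_{z_{i_l}}^m=P_{\mathcal{Q}}(T_{z_{i_1}}\cdots T_{z_{i_l}})^m|_{\mathcal{Q}}$ and linearity of compression) that the inclusion--exclusion identity survives; you are also right that the naive product $\prod_i(I_{\mathcal{Q}}-C_{z_i}^mC_{z_i}^{*m})$ would not do. The paper sidesteps all of this with a lifting trick: since $C_{z_i}^{*m}=T_{z_i}^{*m}|_{\mathcal{Q}}$ and $P_{\mathcal{Q}}T_{z_j}^{m}P_{\mathcal{Q}}=P_{\mathcal{Q}}T_{z_j}^{m}$, one has $C_{z_i}^{*m}(T-A)C_{z_j}^m=T_{z_i}^{*m}\bigl((T-A)P_{\mathcal{Q}}\bigr)T_{z_j}^m|_{\mathcal{Q}}$, and the lifted operator vanishes on $\mathcal{Q}^{\perp}$, so the hypothesis says precisely that $B:=(T-A)P_{\mathcal{Q}}\in\clb(H^2(\D^n))$ satisfies $T_{z_i}^{*m}BT_{z_j}^m\raro 0$ in norm for all $i,j$; Theorem \ref{th1} applied to $B$ then gives compactness of $B$, hence of $T-A=B|_{\mathcal{Q}}$. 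The paper's route is shorter and reuses Theorem \ref{th1} as a black box; yours is self-contained in $\mathcal{Q}$ and makes explicit exactly where joint invariance of $\mathcal{Q}^{\perp}$ enters, at the cost of essentially reproving a compressed version of Theorem \ref{th1}. Both arguments are sound.
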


\begin{proof}
We first note that, as in the proof of Theorem \ref{th2}, the
assumption $C_{z_i}^{*m} T C_{z_i}^m \raro A$ as $m \raro \infty$
implies that
\[
C_{z_i}^{*} A C_{z_i} = A,
\]
for all $i =1, \ldots n$. Now it follows from the definition of
$C_{z_i}$ that
\[
C_{z_i}^{*m} = T^{*m}_{z_i}|_{\mathcal{Q}},
\]
and hence
\[
C_{z_i}^{*m} (T -A) C_{z_j}^m = T_{z_i}^{*m}(T-A)P_{\mathcal{Q}}T_{z_j}^{m}|_{\mathcal{Q}},
\]
for all $i, j = 1, \ldots, n$ and $m \geq 1$. By once again using
the fact that
\[
P_{\mathcal{Q}}T_{z_j}^{m} P_{\mathcal{Q}} = P_{\clq} T^{m}_{z_j},
\]
one sees that
\[
T_{z_i}^{*m}(T-A)P_{\mathcal{Q}}T_{z_j}^{m}  =T_{z_i}^{*m}(T-A)
P_{\mathcal{Q}}T_{z_j}^{m}P_{\mathcal{Q}}.
\]
Hence $C_{z_i}^{*m} (T -A) C_{z_j}^m \raro 0$ in $\clb(\clq)$ if and
only if $T_{z_i}^{*m}(T-A)P_{\mathcal{Q}}T_{z_j}^{m}\raro 0$ in
$\clb(H^2(\D^n))$ as $m \raro \infty$.

\NI Therefore, if $C_{z_i}^{*m} (T -A) C_{z_j}^m \raro 0$ as $m
\raro \infty$ in norm for all $i, j = 1, \ldots, n$, then
$T_{z_i}^{*m}(T-A)P_{\mathcal{Q}}T_{z_j}^{m}\raro 0$ in
$\clb(H^2(\D^n))$ as $m \raro \infty$, and consequently by Theorem
\ref{th1}, $(T-A) P_{\clq}$ is a compact operator on
$H^{2}(\mathbb{D}^n)$. Therefore
\[
(T-A) = (T-A) P_{\clq},
\]
is a compact operator on $\clq$, which proves the necessary part.

Conversely, let $T - A $ be a compact operator on $\mathcal{Q}$ and
$C_{z_i}^{*} A C_{z_i} = A$ for all $i =1, \ldots n$. Since
$C_{z_i}^{*m} \raro 0$ as $m \raro \infty$ in the strong operator
topology, Lemma \ref{sec1.lem1} implies that
\[
C_{z_i}^{*m} (T -A) C_{z_j}^m \raro 0,
\]
as $m \raro \infty$. In particular, for all $i =1, \ldots n$
\[
C_{z_i}^{*m} T C_{z_i}^m \raro C_{z_i}^{*m} A C_{z_i}^m.
\]
But $C_{z_i}^{*} A C_{z_i} = A$, $i =1, \ldots n$, yields us
\[
C_{z_i}^{*m} T C_{z_i}^m \raro A.
\]
This completes the proof.
\end{proof}

Considering the particular case $\clq_{\theta} =
H^2(\mathbb{D}^n)/\theta H^2(\mathbb{D}^n)$, where $\theta \in
H^{\infty}(\mathbb{D}^n)$ is an inner function, we get the following
result.

\begin{thm}\label{thm-SVQT}
Let $\theta \in H^{\infty}(\mathbb{D}^n)$ be an inner function and
$\clq_{\theta} = H^2(\mathbb{D}^n)/\theta H^2(\mathbb{D}^n)$ and $A
\in \mathcal{B}(\clq_{\theta})$. Then $C_{z_i}^{*} A C_{z_i} =A$ for
all $i=1, \ldots n$, if and only if $A=0$.
\end{thm}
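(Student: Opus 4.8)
The plan is to prove the substantive implication --- that $C_{z_i}^{*}AC_{z_i}=A$ for all $i=1,\dots,n$ forces $A=0$ --- by a direct argument (the reverse implication is immediate, since $A=0$ trivially satisfies $C_{z_i}^{*}AC_{z_i}=A$; and Theorem~\ref{quo.thm1} alone does not suffice, as applied to $T=A$ it merely reproduces the hypotheses). The first step is bookkeeping: since $\clq_{\theta}^{\perp}=\theta H^2(\D^n)$ is invariant under every $T_{z_i}$, one has $P_{\clq_{\theta}}T_{z_i}=P_{\clq_{\theta}}T_{z_i}P_{\clq_{\theta}}$, and hence the $C_{z_i}$ pairwise commute and $C_{z_1}^{k_1}\cdots C_{z_n}^{k_n}h=P_{\clq_{\theta}}(\bm{z}^{\bm{k}}h)$ for every $h\in\clq_{\theta}$ and $\bm{k}\in\mathbb{Z}^{n}_{+}$. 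Iterating the hypothesis in each variable and using commutativity gives, with $\bm{k}_d=(m,\dots,m)$ and $C^{(m)}:=C_{z_1}^{m}\cdots C_{z_n}^{m}$,
\[
A=\big(C^{(m)}\big)^{*}A\,C^{(m)},\qquad\text{hence}\qquad\langle Ah,k\rangle=\big\langle A\,C^{(m)}h,\ C^{(m)}k\big\rangle\quad(h,k\in\clq_{\theta},\ m\geq1).
\]

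The crux is the norm estimate $\|C^{(m)}h\|\to0$ as $m\to\infty$ for each fixed $h\in\clq_{\theta}$ --- the polydisc analogue of the fact that the one-variable model operator lies in $C_{00}$. To prove it I would pass to boundary values in $L^2(\mathbb{T}^n)$. One has $\|C^{(m)}h\|=\|P_{\clq_{\theta}}(\bm{z}^{\bm{k}_d}h)\|=\mathrm{dist}\big(\bm{z}^{\bm{k}_d}h,\ \theta H^2(\D^n)\big)$, the distance being computed in $L^2(\mathbb{T}^n)$. Since $|\theta|=1$ a.e.\ on $\mathbb{T}^n$, the operator $M_{\theta}$ is unitary on $L^2(\mathbb{T}^n)$ with inverse $M_{\bar\theta}$ and carries $H^2(\D^n)$ onto $\theta H^2(\D^n)$, so
\[
\mathrm{dist}\big(\bm{z}^{\bm{k}_d}h,\ \theta H^2(\D^n)\big)=\mathrm{dist}\big(\bm{z}^{\bm{k}_d}\bar\theta\,h,\ H^2(\D^n)\big)=\big\|(I-P_{H^2(\D^n)})\big(\bm{z}^{\bm{k}_d}\bar\theta\,h\big)\big\|_{L^2(\mathbb{T}^n)}.
\]
Expanding the boundary function $\bar\theta\,h=\sum_{\bm{l}\in\mathbb{Z}^n}c_{\bm{l}}e_{\bm{l}}$ (so $\sum_{\bm{l}}|c_{\bm{l}}|^2=\|h\|^{2}<\infty$, as $|\bar\theta|=1$ a.e.), multiplication by $\bm{z}^{\bm{k}_d}$ shifts all indices by $(m,\dots,m)$, so the anti-analytic part of $\bm{z}^{\bm{k}_d}\bar\theta\,h$ has squared norm $\sum_{\bm{l}:\,l_j<-m\ \text{for some }j}|c_{\bm{l}}|^{2}$. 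These index sets decrease with $m$ and have empty intersection, so this is the tail of a convergent series and tends to $0$; hence $\|C^{(m)}h\|\to0$ for every $h\in\clq_{\theta}$.

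Combining the two displays, for all $h,k\in\clq_{\theta}$,
\[
|\langle Ah,k\rangle|=\big|\big\langle A\,C^{(m)}h,\ C^{(m)}k\big\rangle\big|\leq\|A\|\,\|C^{(m)}h\|\,\|C^{(m)}k\|\longrightarrow0,
\]
so $\langle Ah,k\rangle=0$ for all $h,k\in\clq_{\theta}$, i.e.\ $A=0$. The step I expect to be the real point is the estimate $\|C^{(m)}h\|\to0$: it is essential to use the \emph{full} product $C_{z_1}^{m}\cdots C_{z_n}^{m}$ rather than a single $C_{z_i}^{m}$ (if $\theta$ does not depend on $z_i$ then $C_{z_i}$ contains a unilateral shift and $\|C_{z_i}^{m}h\|$ need not decay), and the clean way to see the decay is to conjugate by the unimodular $\theta$ on the torus, turning the distance into a tail of a Fourier series.
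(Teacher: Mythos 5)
Your proof is correct, but it takes a genuinely different route from the paper's. The paper deduces from $C_{z_i}^*AC_{z_i}=A$ and the co-invariance of $\clq_\theta$ that $AP_{\clq_\theta}=T_{z_i}^*(AP_{\clq_\theta})T_{z_i}$ on all of $H^2(\D^n)$, invokes Theorem \ref{thm-ToeplitzD} to write $AP_{\clq_\theta}=T_\psi$ for some $\psi\in L^\infty(\mathbb{T}^n)$, and then kills the symbol: since $AP_{\clq_\theta}$ annihilates $\theta H^2(\D^n)$, one gets $T_{\psi\theta}=T_\psi T_\theta=0$ (citing Gu), hence $\psi\theta=0$ and $\psi=0$ because $|\theta|=1$ a.e. Your argument instead proves the stronger structural fact that the full product $C_{z_1}^m\cdots C_{z_n}^m\raro 0$ in the strong operator topology — i.e.\ the compression of $T_{z_1\cdots z_n}$ to a Beurling-type quotient is of class $C_{0\cdot}$ — via the clean device of conjugating by the unimodular boundary function $\theta$ so that $\|C_{z_1}^m\cdots C_{z_n}^mh\|$ becomes a tail of the Fourier series of $\bar\theta h$; combined with the iterated identity $A=(C_{z_1}^m\cdots C_{z_n}^m)^*A\,(C_{z_1}^m\cdots C_{z_n}^m)$ this gives $A=0$ by a weak-limit argument. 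What each approach buys: the paper's proof is a short application of its own main theorem (at the cost of an external reference for the product formula, which for analytic $\theta$ is in any case elementary), while yours is self-contained, sidesteps the Brown--Halmos machinery entirely, and isolates a lemma of independent interest that also explains why the full product of the $C_{z_i}$, rather than a single one, must be used. Both proofs use the same two essential inputs — the $T_{z_i}$-invariance of $\theta H^2(\D^n)$ and the unimodularity of $\theta$ on $\mathbb{T}^n$ — and all the individual steps in your write-up (the identity $C_{z_1}^{k_1}\cdots C_{z_n}^{k_n}h=P_{\clq_\theta}(\bm{z}^{\bm{k}}h)$, the distance reformulation, and the vanishing of the tail sums) check out.
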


\begin{proof}
Let $C_{z_i}^* A C_{z_i} = A$ for all $i = 1, \ldots, n$. Since
\[
\clq_{\theta}^\perp = \theta H^2(\D^n),
\]
is a joint invariant subspace for $(T_{z_1}, \ldots, T_{z_n})$, it
follows that
\[
P_{\clq_{\theta}} T_{z_i}^*|_{\clq_{\theta}} = T_{z_i}^*
P_{\clq_{\theta}},
\]
and hence
\[
\begin{split}
A P_{\clq_{\theta}} & = (C_{z_i}^* A C_{z_i}) P_{\clq_{\theta}}
\\
& = (P_{\clq_{\theta}} T_{z_i}^*|_{\clq_{\theta}} A
P_{\clq_{\theta}} T_{z_i}|_{\clq_{\theta}}) P_{\clq_{\theta}}
\\
& = T_{z_i}^* A P_{\clq_{\theta}} T_{z_i} P_{\clq_{\theta}}
\\
& = T_{z_i}^* A P_{\clq_{\theta}} T_{z_i}
\\
& = T_{z_i}^* (A P_{\clq_{\theta}}) T_{z_i}.
\end{split}
\]
for all $i = 1, \ldots, n$. This and Theorem \ref{thm-ToeplitzD}
implies that $AP_{\clq_{\theta}}$ is a Toeplitz operator.
Consequently, there exists $\psi \in L^\infty(\mathbb{T}^n)$ such that
\[
A P_{\clq_{\theta}} = T_{\psi}.
\]
On the other hand, since $T_{\theta}$ is an analytic Toeplitz
operator, it follows that
\[
AP_{\mathcal{Q}}T_{{\theta}}=0.
\]
Hence, using [Theorem 1, C. Gu \cite{CGU}], we conclude that
\[
\begin{split}
T_{\psi \theta} & = T_{\psi} T_{\theta}
\\
& = A P_{\mathcal{Q}_{\theta}} T_{\theta}
\\
& = 0.
\end{split}
\]
This completes the proof of the theorem.
\end{proof}

Summing up the above two results and Lemma \ref{sec1.lem1}, we have
the following generalization of Theorem 1.2 in \cite{CHA}.

\begin{thm}\label{thm-ntheta}
For an inner function $\theta \in H^\infty(\mathbb{D}^n)$ and
bounded linear operators $T$ and $A$ on $\clq_{\theta} =
H^2(\mathbb{D}^n)/\theta H^2(\mathbb{D}^n)$, the following are
equivalent:

(i) $C_{z_i}^{*m} T C_{z_i}^m \raro A$ and $C_{z_i}^{*m} (T -A)
C_{z_j}^m \raro 0$ in norm for all $i, j = 1, \ldots, n$;

(ii) $C_{z_i}^{*m} T C_{z_i}^m \raro 0$ in norm for all $i = 1,
\ldots, n$;

(iii) $T$ is compact.
\end{thm}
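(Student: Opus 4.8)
The plan is to prove the cycle of implications (i) $\Rightarrow$ (iii) $\Rightarrow$ (ii) $\Rightarrow$ (i), leaning almost entirely on results already established in Sections 3 and 4. The real content has already been packaged into Theorem \ref{quo.thm1} and Theorem \ref{thm-SVQT}; the present statement is essentially a corollary tying them together, with the observation that over $\clq_\theta$ the only operator fixed by all the conjugations $C_{z_i}^*(\cdot)C_{z_i}$ is $0$.

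First I would show (i) $\Rightarrow$ (iii). Assume $C_{z_i}^{*m} T C_{z_i}^m \raro A$ and $C_{z_i}^{*m}(T-A)C_{z_j}^m \raro 0$ in norm for all $i,j$. By Theorem \ref{quo.thm1} applied to $\clq = \clq_\theta$, this gives $T = A + K$ with $K$ compact on $\clq_\theta$ and $C_{z_i}^* A C_{z_i} = A$ for all $i = 1, \ldots, n$. But Theorem \ref{thm-SVQT} forces $A = 0$, so $T = K$ is compact.

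Next, (iii) $\Rightarrow$ (ii): if $T$ is compact on $\clq_\theta$, then since $C_{z_i}^* = T_{z_i}^*|_{\clq_\theta}$ and $T_{z_i}^{*m} \raro 0$ in the strong operator topology on $H^2(\D^n)$ (indeed for each $f \in H^2(\D^n)$, $\|T_{z_i}^{*m} f\|^2 = \sum_{k_i \geq m} |\langle f, e_{\bm k}\rangle|^2 \raro 0$), we get $C_{z_i}^{*m} \raro 0$ strongly on $\clq_\theta$. Lemma \ref{sec1.lem1} applied with $R = C_{z_i}$ (a contraction) and the compact operator $T C_{z_i}^m$ — or more directly, Lemma \ref{sec1.lem1} gives $C_{z_i}^{*m}(T C_{z_i}^m) \raro 0$ since $T$ is compact and $\|C_{z_i}^m\| \leq 1$ — yields $C_{z_i}^{*m} T C_{z_i}^m \raro 0$ in norm for each $i$. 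Finally (ii) $\Rightarrow$ (i) is immediate by taking $A = 0$: the condition $C_{z_i}^{*m}(T-0)C_{z_j}^m \raro 0$ for all $i,j$ is not automatic from the $i = j$ case, but it follows again from Lemma \ref{sec1.lem1} once we know $T$ is compact — and compactness of $T$ is obtained from (ii) via the necessity direction of Theorem \ref{quo.thm1} with $A = 0$ (the hypothesis there only requires $C_{z_i}^{*m} T C_{z_i}^m \raro A$ for each $i$ together with the off-diagonal convergence, but in fact the proof of Theorem \ref{quo.thm1} shows the diagonal convergence alone, reinterpreted in $H^2(\D^n)$, already gives $(T-A)P_{\clq_\theta}$ compact by Theorem \ref{th1}). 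So it is cleanest to route (ii) $\Rightarrow$ (iii) $\Rightarrow$ (i).

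The only mild subtlety — and the step I would be most careful about — is the circularity in how Theorem \ref{quo.thm1} is invoked: its "only if" direction formally requires the off-diagonal hypothesis, so to get from (ii) to compactness I should instead argue directly that $C_{z_i}^{*m} T C_{z_i}^m \raro 0$ for all $i$ implies, via $C_{z_i}^{*m} T C_{z_i}^m = T_{z_i}^{*m} T P_{\clq_\theta} T_{z_i}^m|_{\clq_\theta}$ and the identity $P_{\clq_\theta} T_{z_i}^m P_{\clq_\theta} = P_{\clq_\theta} T_{z_i}^m$, that $T_{z_i}^{*m}(T P_{\clq_\theta}) T_{z_i}^m \raro 0$ in $\clb(H^2(\D^n))$ for each $i$; by Theorem \ref{th1} this needs the off-diagonal terms $T_{z_i}^{*m}(T P_{\clq_\theta}) T_{z_j}^m \raro 0$ as well. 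To supply those, one uses that the one-variable analysis (as in Theorem \ref{th1}'s proof, or directly) shows a single bounded operator $S$ on $H^2(\D^n)$ with $T_{z_i}^{*m} S T_{z_i}^m \raro 0$ in norm for every $i$ is already compact — this is exactly the $n$-variable Feintuch-type fact that the diagonal compressions alone detect compactness, which can be extracted from the finite-rank approximation $\tilde F_m$ in the proof of Theorem \ref{th1} together with the binomial expansion of $I - F_m$ given there. Granting that, compactness of $T P_{\clq_\theta}$, hence of $T = T P_{\clq_\theta}|_{\clq_\theta}$, follows, and then (i) is recovered from Lemma \ref{sec1.lem1} with $A = 0$. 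I expect no genuine obstacle beyond bookkeeping the restriction/compression identities correctly.
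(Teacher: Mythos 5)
Your handling of (i) $\Leftrightarrow$ (iii) is correct and is exactly the paper's intended route: Theorem \ref{quo.thm1} gives $T = A + K$ with $K$ compact and $C_{z_i}^{*}AC_{z_i} = A$ for all $i$, Theorem \ref{thm-SVQT} then forces $A = 0$, and the converse (iii) $\Rightarrow$ (i), (ii) is Lemma \ref{sec1.lem1}. The gap is in your passage from (ii) to (iii). You are right that Theorem \ref{quo.thm1} cannot be invoked there, since its hypothesis includes the off-diagonal convergence; but the ``$n$-variable Feintuch-type fact'' you then assert --- that $T_{z_i}^{*m} S T_{z_i}^m \raro 0$ in norm for every single $i$ already forces $S \in \clb(H^2(\D^n))$ to be compact --- is false, and it cannot be extracted from the proof of Theorem \ref{th1}: in the expansion of $(I - F_m)T(I - F_m)$ the cross terms such as $T_{z_1}^{m}T_{z_1}^{*m} T\, T_{z_2}^{m}T_{z_2}^{*m}$ are controlled only by $\|T_{z_1}^{*m} T T_{z_2}^{m}\|$, i.e.\ by the off-diagonal compressions, and these are genuinely needed. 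Concretely, on $H^2(\D^2)$ define $S$ on the monomial basis by $S(z_1^k) = z_2^k$ for $k \geq 0$ and $S(z_1^k z_2^l) = 0$ for $l \geq 1$. Then $S$ is a non-compact partial isometry, $T_{z_1}^{*m} S T_{z_1}^{m} = 0$ and $T_{z_2}^{*m} S T_{z_2}^{m} = 0$ for all $m \geq 1$, yet $\|T_{z_2}^{*m} S T_{z_1}^{m}\| = 1$ for all $m$.

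Worse, this example sits inside the theorem itself. Take $n = 2$ and $\theta = z_1 z_2$, so that $\clq_{\theta}$ has orthonormal basis $\{z_1^k\}_{k \geq 0} \cup \{z_2^l\}_{l \geq 1}$, and define $T \in \clb(\clq_{\theta})$ by $Tz_1^k = z_2^k$ and $Tz_2^l = 0$ ($l \geq 1$); the operator $S$ above is just $TP_{\clq_{\theta}}$. One checks directly that $C_{z_i}^{*m} T C_{z_i}^{m} = 0$ for $i = 1, 2$ and all $m \geq 1$, so (ii) holds, while $T$ is not compact and $\|C_{z_2}^{*m} T C_{z_1}^{m} 1\| = \|1\| = 1$ for all $m$. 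So condition (ii) as literally stated (diagonal compressions only) is strictly weaker than (i) and (iii), and the implication you are trying to supply cannot be proved; the statement becomes correct --- and your argument then goes through verbatim via Theorem \ref{th1} --- only if (ii) is read as $C_{z_i}^{*m} T C_{z_j}^{m} \raro 0$ for all pairs $i, j$. To be fair to you, the paper's own one-line proof (``summing up the above two results and Lemma \ref{sec1.lem1}'') glosses over exactly the same point: the two cited results yield (i) $\Leftrightarrow$ (iii) and (iii) $\Rightarrow$ (ii), but nothing in them returns from (ii). Your instinct that this was ``the step to be most careful about'' was exactly right; the resolution is that the step is not merely delicate but false.
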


For asymptotic Toeplitzness of composition operators on the Hardy
space of the unit sphere in $\mathbb{C}^n$ we refer the reader to
Nazarov and Shapiro \cite{NS}, and Cuckovic and Le \cite{CLe}.

\section{Asymptotic Toeplitz operators on $H^2_{\mathcal{E}}(\mathbb{D})$}

The main purpose of this section is to characterize the compact
operators on the model space $H^2_{\mathbb{C}^p}(\mathbb{D})/ \Theta
H^2_{\mathbb{C}^p}(\mathbb{D})$, where $\Theta \in
H^\infty_{\clb(\mathbb{C}^p)}(\mathbb{D})$ is an inner function. We
note that this result for $p = 1$ case can be found in \cite{CHA}.
Moreover, our proof seems more shorter and conceptually different
(for instance, compare Theorem \ref{sec1.lem3} with Proposition 2.10
in \cite{CHA}).

We begin with the definition of a Toeplitz operator with
operator-valued symbol.

\begin{defn}
Let $\cle$ be a Hilbert space. A bounded linear operator $T$ on
$H^2_{\mathcal{E}}(\mathbb{D})$ is said to be Toeplitz if there
exists an operator-valued function $\Phi$ in
$L_{\mathcal{B}(\mathcal{E})}^{\infty}(\mathbb{T})$ such that
$T=P_{H^2_{\mathcal{E}}(\mathbb{D})}M_{\Phi}
\mid_{H^2_{\mathcal{E}}(\mathbb{D})}$.
\end{defn}

Here let us observe, before we proceed further, the following
characterization of Toeplitz operators on a vector-valued Hardy
space. The result is probably known to the experts but we were not
able to find a reference in the literature. Since the result follows
adapting similar concepts and techniques used in the proof of
Theorem \ref{thm-ToeplitzD}, we give a sketch of the proof.

\begin{thm}\label{thm-TVC}
Let $\mathcal{E}$ be a Hilbert space and $T \in
\mathcal{B}(H^2_{\mathcal{E}}(\mathbb{D}))$. Then $T $ is a Toeplitz
operator if and only if $T_z^{*} T T_z = T$.
\end{thm}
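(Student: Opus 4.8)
The plan is to mimic the scalar-valued argument from Theorem~\ref{thm-ToeplitzD} in the $n=1$ setting, but now bookkeeping with vector coefficients. First I would fix an orthonormal basis $\{\zeta_l : l \in \Lambda\}$ of $\mathcal{E}$, so that $\{z^k \zeta_l : k \in \mathbb{Z}_+, \, l \in \Lambda\}$ is an orthonormal basis of $H^2_{\mathcal{E}}(\mathbb{D})$, and realize $H^2_{\mathcal{E}}(\mathbb{D})$ as a subspace of $L^2_{\mathcal{E}}(\mathbb{T})$ with orthonormal basis $\{e^{ik\theta}\zeta_l : k \in \mathbb{Z}, \, l \in \Lambda\}$. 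Assuming $T_z^* T T_z = T$, iterate to get $T_z^{*k} T T_z^k = T$ for all $k \geq 1$, and hence $\langle T(e^{i(p+k)\theta}\zeta_l), e^{i(q+k)\theta}\zeta_{l'}\rangle = \langle T(e^{ip\theta}\zeta_l), e^{iq\theta}\zeta_{l'}\rangle$ for $p,q \in \mathbb{Z}_+$. This shifted invariance lets me define, exactly as in the scalar case, operators $A_k = M_{e^{i\theta}}^{*k} T P_{H^2_{\mathcal{E}}(\mathbb{D})} M_{e^{i\theta}}^{k}$ on $L^2_{\mathcal{E}}(\mathbb{T})$ with $\|A_k\| \leq \|T\|$, show the matrix entries $\langle A_k f, g\rangle$ stabilize as $k \to \infty$ for basis vectors $f,g$, and extract a bounded bilinear form and thus a bounded operator $A_\infty$ on $L^2_{\mathcal{E}}(\mathbb{T})$ with $\langle A_\infty f, g\rangle = \lim_k \langle A_k f, g\rangle$.

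Next I would verify that $A_\infty$ commutes with $M_{e^{i\theta}}$ on $L^2_{\mathcal{E}}(\mathbb{T})$: using the hypothesis $T_z^* T T_z = T$ in the form that produces a shift-by-one identity among the matrix entries, one checks $\langle A_\infty(e^{i(p+1)\theta}\zeta_l), e^{i(q+1)\theta}\zeta_{l'}\rangle = \langle A_\infty(e^{ip\theta}\zeta_l), e^{iq\theta}\zeta_{l'}\rangle$, i.e. $M_{e^{i\theta}}^* A_\infty M_{e^{i\theta}} = A_\infty$, hence $A_\infty M_{e^{i\theta}} = M_{e^{i\theta}} A_\infty$ (the unitarity of $M_{e^{i\theta}}$ on $L^2_{\mathcal{E}}(\mathbb{T})$ upgrades the ``$*$-compression'' identity to genuine commutation). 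By the standard characterization of operators on $L^2_{\mathcal{E}}(\mathbb{T})$ commuting with the bilateral shift \cite{NAGY}, there is $\Phi \in L^\infty_{\mathcal{B}(\mathcal{E})}(\mathbb{T})$ with $A_\infty = M_\Phi$. Finally, for $f,g \in H^2_{\mathcal{E}}(\mathbb{D})$ one has $\langle A_\infty f, g\rangle_{L^2} = \lim_k \langle T_z^{*k} T T_z^k f, g\rangle_{H^2} = \langle Tf, g\rangle_{H^2}$, so $T = P_{H^2_{\mathcal{E}}(\mathbb{D})} M_\Phi|_{H^2_{\mathcal{E}}(\mathbb{D})}$, as required. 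The converse is the routine direction: if $T = P M_\Phi|$, then for $f,g \in H^2_{\mathcal{E}}(\mathbb{D})$, $\langle T_z^* T T_z f, g\rangle = \langle \Phi e^{i\theta} f, e^{i\theta} g\rangle_{L^2} = \langle \Phi f, g\rangle_{L^2} = \langle Tf, g\rangle$, since $M_{e^{i\theta}}$ is unitary on $L^2_{\mathcal{E}}(\mathbb{T})$ and commutes with $M_\Phi$.

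The one genuinely new point compared with the scalar proof is handling the operator-valued (rather than scalar-valued) commutant, but this is entirely standard: the commutant of the bilateral shift on $L^2_{\mathcal{E}}(\mathbb{T})$ is precisely $\{M_\Phi : \Phi \in L^\infty_{\mathcal{B}(\mathcal{E})}(\mathbb{T})\}$, so no new obstacle arises there. The main bookkeeping obstacle is simply making sure the ``shift far enough into $\mathbb{Z}_+^{}$'' manoeuvre — which in the scalar polydisc case uses the multi-index $\bm{k}_d = (k,\dots,k)$ — is correctly transcribed to the vector-valued single-variable case, where it is actually simpler since there is only one coordinate. Because the paper explicitly asks only for a sketch, I would present the construction of $A_k$, the limit operator $A_\infty$, the commutation with $M_{e^{i\theta}}$, and the identification via \cite{NAGY}, then note the converse computation, referring back to the proof of Theorem~\ref{thm-ToeplitzD} for the details that are word-for-word identical.
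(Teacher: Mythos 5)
Your proposal follows essentially the same route as the paper's proof: form $A_k = M_{e^{i\theta}}^{*k} T P M_{e^{i\theta}}^{k}$, use the stabilization of matrix entries coming from $T_z^{*k}TT_z^k = T$ to extract a bounded limit operator $A_\infty$ on $L^2_{\mathcal{E}}(\mathbb{T})$, identify $A_\infty = M_\Phi$ via the commutant of the bilateral shift, and compress back to $H^2_{\mathcal{E}}(\mathbb{D})$, with the converse being the same routine computation. The only cosmetic difference is that you fix an orthonormal basis of $\mathcal{E}$ and work with basis vectors $\zeta_l$, whereas the paper tests against arbitrary $\eta, \zeta \in \mathcal{E}$; this changes nothing of substance.
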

\begin{proof}
Note first that $\{e_m \eta : m \in \mathbb{Z}, \eta \in
\mathcal{E}\}$ is an orthogonal basis of
$L^2_{\mathcal{E}}(\mathbb{T})$, where $e_m = e^{im\theta}$, $m \in
\mathbb{Z}$. For each $k \geq 1$, set
\[
A_k = M_{e^{i\theta}}^{*k} T P M_{e^{i\theta}}^{k},
\]
where $M_{e^{i\theta}}$ is the bilateral shift on
$L^2_{\mathcal{E}}(\mathbb{T})$ and $P$ is the orthogonal projection
from $L^2_{\mathcal{E}}(\mathbb{T})$ onto
$H^2_{\mathcal{E}}(\mathbb{D})$. If $T_{z}^*T T_z = T$ and $k \in
\mathbb{Z}_+$, then
\[
\langle T e_{i+k} \eta, e_{j+k} \zeta \rangle = \langle T e_i \eta,
e_j \zeta \rangle,
\]
for all $i, j \geq 0$. Then for each $l, m \in \mathbb{Z}$, as in
the proof of Theorem \ref{thm-ToeplitzD}, there exists $t \geq 0$
such that $l + k, m + k \geq 0$ for all $k \geq t$, and so
\[
\langle A_k e_l \eta, e_m \zeta \rangle \raro \langle T e_{l+t}
\eta, e_{m+t} \zeta \rangle,
\]
as $k \raro \infty$. Then
\[
(e_l \eta, e_m \zeta) \mapsto \lim_{k\raro \infty} \langle A_k e_l
\eta, e_m \zeta \rangle,
\]
defines a bounded bilinear form on the span of $\{ e_l \eta : l \in
\mathbb{Z}, \eta \in \cle\}$. Therefore, there exists (again,
following the proof of  Theorem \ref{thm-ToeplitzD}) $A_{\infty} \in
\clb(L^2_{\cle}(\mathbb{T}))$ such that
\[
\langle A_{\infty} f, g \rangle = \lim_{k \raro \infty} \langle A_k
f, g \rangle,
\]
for all $f, g \in L^2_{\cle}(\mathbb{T})$. This yields
\[
A_{\infty} M_{e^{i\theta}} = M_{e^{i\theta}} A_{\infty}.
\]
Hence there exists a $\Phi \in
L_{\mathcal{B}(\mathcal{E})}^{\infty}(\mathbb{T})$
such that
\[
A_{\infty}=M_{\Phi},
\]
and hence
\[
T = P_{H_{\mathcal{E}}^2(\mathbb{D})}M_{\Phi}
\mid_{H_{\mathcal{E}}^2(\mathbb{D})}.
\]
The proof of the converse part proceeds verbatim as that of Theorem
\ref{thm-ToeplitzD}. This completes the proof of the theorem.
\end{proof}

Following Feintuch \cite{FEN} we now define an asymptotic Toeplitz
operator on a vector-valued Hardy space.

\begin{defn}
Let $\cle$ be a Hilbert space. A bounded linear operator $T$ on
$H^2_{\cle}(\mathbb{D})$ is said to be an asymptotic Toeplitz
operator if there exists $A \in \mathcal{B}(H^2_{\cle}(\mathbb{D}))$
such that $T_{z}^{*m} T T_{z}^{m} \rightarrow A$  as $m \rightarrow
\infty$ in norm.
\end{defn}

In the theorem below, we generalize the Feintuch's characterization
\cite{FEN} (see also Theorem F, page 195, \cite{NS}) of asymptotic
Toeplitz operators on Hardy space to asymptotic Toeplitz operators
on Hardy space of finite multiplicity. However, the method of proof
here is adapted from the original proof by Feintuch.

\begin{thm}\label{sec1.lem2}
Let $T, A \in \mathcal{B}(H^2_{\mathbb{C}^p}(\mathbb{D}))$. Then
$T_z^{*m} TT_z^m \raro A$ in norm if and only if $A$ is a Toeplitz
operator and $(T -A)$ is compact.
\end{thm}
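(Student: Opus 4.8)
The plan is to mirror the structure of the proof of Theorem~\ref{th2}, substituting the vector-valued Brown--Halmos characterization (Theorem~\ref{thm-TVC}) and the vector-valued analogue of the compactness criterion (Theorem~\ref{th1}) at the appropriate points. For the forward direction, suppose $T_z^{*m} T T_z^m \raro A$ in norm. First I would show $A$ is Toeplitz: for each $m \geq 1$,
\[
\|A - T_z^* A T_z\| \leq \|A - T_z^{*(m+1)} T T_z^{m+1}\| + \|T_z^* (T_z^{*m} T T_z^m - A) T_z\| \leq \|A - T_z^{*(m+1)} T T_z^{m+1}\| + \|T_z^{*m} T T_z^m - A\|,
\]
and letting $m \raro \infty$ gives $T_z^* A T_z = A$, so $A$ is Toeplitz by Theorem~\ref{thm-TVC}. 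Next, to conclude $T - A$ is compact, I would invoke the vector-valued version of Theorem~\ref{th1}. Here one subtlety arises: in the $p > 1$ setting (and even the scalar setting for $n=1$), the operator $T_z^m T_z^{*m} = I - P_{\mathcal{F}_m}$ where $\mathcal{F}_m$ is the span of the first $m$ coefficient-subspaces, which is $mp$-dimensional when $\mathcal{E} = \mathbb{C}^p$, hence still finite-dimensional; so $P_{\mathcal{F}_m}$ is finite rank and the argument of Theorem~\ref{th1} (with $n = 1$, $\tilde F_m = T_z^m T_z^{*m} T + T T_z^m T_z^{*m} - \cdots$, wait—more precisely $F_m = I - T_z^m T_z^{*m} = P_{\mathcal{F}_m}$, $\tilde F_m = TF_m + F_m T - F_m T F_m$, and $T - \tilde F_m = (I - F_m) T (I - F_m) = T_z^m T_z^{*m} T T_z^m T_z^{*m}$) shows $T - A$ is a norm limit of finite rank operators, since $T_z^{*m}(T - A) T_z^m \raro 0$ in norm (which follows from $T_z^{*m} T T_z^m \raro A$ together with $T_z^* A T_z = A$ implying $T_z^{*m} A T_z^m = A$). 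Thus $T - A$ is compact.

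For the converse, suppose $A$ is Toeplitz and $K := T - A$ is compact. Since $T_z$ is a shift on $H^2_{\mathbb{C}^p}(\mathbb{D})$, we have $T_z^{*m} \raro 0$ in the strong operator topology, so Lemma~\ref{sec1.lem1} gives $T_z^{*m} K \raro 0$ in norm, hence $T_z^{*m} K T_z^m \raro 0$ in norm. Also $A$ Toeplitz means $T_z^* A T_z = A$ (the easy direction of Theorem~\ref{thm-TVC}), so $T_z^{*m} A T_z^m = A$ for all $m$. Therefore
\[
T_z^{*m} T T_z^m = T_z^{*m} A T_z^m + T_z^{*m} K T_z^m = A + T_z^{*m} K T_z^m \raro A
\]
in norm, as required.

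The main obstacle I anticipate is establishing the vector-valued analogue of Theorem~\ref{th1}, namely that $K \in \clb(H^2_{\mathbb{C}^p}(\mathbb{D}))$ with $T_z^{*m} K T_z^m \raro 0$ in norm implies $K$ is compact. For $n = 1$ this is exactly the one-variable case of the argument in Theorem~\ref{th1}, and the finite-multiplicity hypothesis $\mathcal{E} = \mathbb{C}^p$ is precisely what keeps $\mathcal{F}_m$ finite-dimensional so that $P_{\mathcal{F}_m}$, and hence $\tilde F_m$, is finite rank; if $\mathcal{E}$ were infinite-dimensional this step would fail (which is why the theorem is stated for $\mathbb{C}^p$). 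The rest is bookkeeping: verifying $T_z^{*m} K T_z^m \raro 0$ follows from the hypothesis $T_z^{*m} T T_z^m \raro A$ once $T_z^{*m} A T_z^m = A$ is in hand, and that identity is immediate from $T_z^* A T_z = A$ by induction on $m$. I would present the proof by first stating (or pointing to Theorem~\ref{th1} with $n = 1$ adapted to $\mathbb{C}^p$-coefficients) the compactness criterion, then running the two directions as above; since Theorem~\ref{th1}'s proof transcribes verbatim with $I_{H^2(\mathbb{D})}$ replaced by $I_{H^2_{\mathbb{C}^p}(\mathbb{D})}$ and $\mathcal{F}_m = \mathbb{C}^p \oplus z\mathbb{C}^p \oplus \cdots \oplus z^{m-1}\mathbb{C}^p$, I would simply remark on this rather than reproduce it.
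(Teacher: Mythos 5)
Your proposal is correct and follows essentially the same route as the paper: the triangle-inequality argument gives $T_z^*AT_z=A$ (hence $A$ Toeplitz by Theorem~\ref{thm-TVC}), and the compactness of $T-A$ is obtained by exactly the paper's finite-rank truncation, with your $F_m=I-T_z^mT_z^{*m}=P_{\mathcal{F}_m}$ being the paper's $Q_m=\sum_{k=0}^{m-1}T_z^kP_{\mathbb{C}^p}T_z^{*k}$ (finite rank precisely because $\dim\mathcal{E}=p<\infty$, as you note). The converse via Lemma~\ref{sec1.lem1} and $T_z^{*m}AT_z^m=A$ also matches the paper.
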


\begin{proof}
It follows that
\[
\| T_z^{*(m+1)} T T_z^{m+1} - T_z^{*} A T_z \| \leq \| T_z^{*m} T
T_z^{m} - A \| \rightarrow 0
\]
as $m \rightarrow \infty$. This and the triangle inequality yields
$A = T_z^{*} A T_z$. Now let $R_m =T_z^{m} T_z^{*m}$ and
\[
Q_m = I - R_m.
\]
Further, let $P_{\mathbb{C}^p}$ denote the orthogonal projection of
$H^2_{\mathbb{C}^p}(\mathbb{D})$ onto the space of
($\mathbb{C}^p$-valued) constant functions. Since $T_z T_z^* =
I_{H^2_{\mathbb{C}^p}(\mathbb{D})} - P_{\mathbb{C}^p}$, it follows
that
\[
Q_m = \displaystyle \sum_{k=0}^{m-1}T_z^{k} P_{\mathbb{C}^p}
T_z^{*k} \quad \quad (m \geq 1).
\]
Then $Q_m$, $m \geq 1$, is a finite rank operator, and therefore
\[
F_m = (T - A) Q_m + Q_m (T - A) - Q_m (T - A) Q_m \quad \quad (m
\geq 1),
\]
is also a finite rank operator. Moreover
\[
(T - A) - F_m = R_m (T - A) R_m \quad \quad (m \geq 1),
\]
yields
\[
\|(T - A) - F_m\| = \|R_m (T - A) R_m\| \leq  \|T_z^{*m} T T_z^{m} -
A \| \rightarrow 0,
\]
as $m \rightarrow \infty$. So $T - A$ is compact as desired.

\noindent The converse follows from Lemma \ref{sec1.lem1}. This
completes the proof.
\end{proof}

We have the following result in the model space setting.

\begin{propn}\label{sec1.lem3}
Let $\Theta \in H^{\infty}_{\mathcal{B}(\mathcal{E})}(\mathbb{D})$
be an inner multiplier and $T \in \mathcal{B}(\clq_{\Theta})$.
Assume that $\Theta(e^{i \theta})$ is invertible a.e. Then
$S_{\Theta}^{*} T S_{\Theta} =T$ if and only if $T = 0$.
\end{propn}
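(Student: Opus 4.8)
The plan is to mimic the Jordan-block argument used in Theorem \ref{thm-SVQT}, but in the vector-valued single-variable setting. Recall that $\clq_{\Theta} = H^2_{\cle}(\D) \ominus \Theta H^2_{\cle}(\D)$ is coinvariant for $T_z$ on $H^2_{\cle}(\D)$, so $S_{\Theta}^* = T_z^*|_{\clq_{\Theta}}$ and $P_{\clq_{\Theta}} T_z^* = T_z^* P_{\clq_{\Theta}}$. The hard direction is to show $S_{\Theta}^* T S_{\Theta} = T \implies T = 0$; the converse is trivial. So assume $S_{\Theta}^* T S_{\Theta} = T$.

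First I would transfer the identity to $H^2_{\cle}(\D)$. Set $\wt{T} = T P_{\clq_{\Theta}} \in \clb(H^2_{\cle}(\D))$. Using $S_{\Theta} = P_{\clq_{\Theta}} T_z|_{\clq_{\Theta}}$, the coinvariance relation $P_{\clq_{\Theta}} T_z^* = T_z^* P_{\clq_{\Theta}}$, and the standard identity $P_{\clq_{\Theta}} T_z P_{\clq_{\Theta}} = P_{\clq_{\Theta}} T_z$, one computes exactly as in Theorem \ref{thm-SVQT} that
\[
\wt{T} = T_z^*\, \wt{T}\, T_z .
\]
By Theorem \ref{thm-TVC}, $\wt{T} = T P_{\clq_{\Theta}}$ is therefore a Toeplitz operator on $H^2_{\cle}(\D)$: there is $\Phi \in L^\infty_{\clb(\cle)}(\T)$ with $\wt{T} = T_\Phi$. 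On the other hand, since $T$ acts on $\clq_{\Theta}$ and $\Theta H^2_{\cle}(\D) = \clq_{\Theta}^\perp$, we have $\wt{T} T_\Theta = T P_{\clq_{\Theta}} T_\Theta = 0$, because $T_\Theta$ maps $H^2_{\cle}(\D)$ into $\Theta H^2_{\cle}(\D) = \ker P_{\clq_{\Theta}}$.

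Next I would extract the consequence $T_\Phi T_\Theta = 0$ and push it down to symbols. Since $\Theta \in H^\infty_{\clb(\cle)}(\D)$, the product $T_\Phi T_\Theta$ is itself the Toeplitz operator $T_{\Phi \Theta}$ (the analytic factor on the right allows the product of Toeplitz operators to collapse to a single Toeplitz operator — this is the vector-valued analogue of the fact invoked via C. Gu's theorem in the proof of Theorem \ref{thm-SVQT}; one should cite the appropriate operator-valued version, e.g.\ the standard computation $P M_\Phi P M_\Theta P = P M_{\Phi\Theta} P$ valid because $\Theta$ is analytic). Hence $T_{\Phi\Theta} = 0$, which forces $\Phi(e^{i\theta})\,\Theta(e^{i\theta}) = 0$ for a.e.\ $e^{i\theta} \in \T$. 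Here is where the hypothesis that $\Theta(e^{i\theta})$ is invertible a.e.\ enters: multiplying on the right by $\Theta(e^{i\theta})^{-1}$ gives $\Phi = 0$ a.e., so $\wt{T} = T_\Phi = 0$, and consequently $T = T P_{\clq_{\Theta}}|_{\clq_{\Theta}} = \wt{T}|_{\clq_{\Theta}} = 0$.

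The main obstacle is the step $T_\Phi T_\Theta = T_{\Phi\Theta}$ in the operator-valued setting: one must be sure that the analytic (inner, operator-valued) multiplier $\Theta$ really does make the product of Toeplitz operators Toeplitz, and that the resulting symbol identity $\Phi\Theta = 0$ a.e.\ is genuinely pointwise. This is routine but needs the correct operator-valued reference; once it is in hand, the a.e.-invertibility of $\Theta$ closes the argument cleanly. A small secondary point to verify carefully is the algebraic derivation of $\wt{T} = T_z^* \wt{T} T_z$ from $S_{\Theta}^* T S_{\Theta} = T$, which is the verbatim vector-valued analogue of the corresponding display in Theorem \ref{thm-SVQT}.
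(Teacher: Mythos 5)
Your proposal is correct and follows essentially the same route as the paper: pass to $TP_{\clq_{\Theta}}$, use the coinvariance of $\clq_{\Theta}$ to get $TP_{\clq_{\Theta}} = T_z^*(TP_{\clq_{\Theta}})T_z$, invoke Theorem \ref{thm-TVC} to obtain a symbol, use $TP_{\clq_{\Theta}}T_{\Theta}=0$ together with $T_{\Phi}T_{\Theta}=T_{\Phi\Theta}$ (valid since $\Theta$ is analytic) to conclude $\Phi\Theta=0$ a.e., and finish with the a.e.\ invertibility of $\Theta$. The technical point you flag about the operator-valued product formula is exactly the step the paper also relies on, so there is nothing to add.
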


\begin{proof}

The proof goes exactly along the same lines as the proof of Theorem
\ref{thm-SVQT}. Since
\[
T P_{\clq_{\Theta}} = T_z^{*} (T P_{\clq_{\Theta}}) T_z,
\]
it follows from Theorem \ref{thm-TVC} that $TP_{\mathcal{Q}}$ is a
Toeplitz operator. Consequently, there exists $\Psi \in
L^{\infty}_{\mathcal{B}({\mathcal{E}})}({\mathbb{T}})$ \cite{NAGY}
such that
\[
T P_{\clq_{\Theta}}= T_{\Psi}.
\]
Since $T_{\Theta}$ is an analytic Toeplitz operator, again as in the
proof of Theorem \ref{thm-SVQT}, it follows that
\[
T_{\Psi \Theta} = 0,
\]
and hence
\[
\Psi \Theta =0.
\]
Since $\Theta $ is invertible a.e., it follows that $\Psi =0$ a.e.
and hence $T =0$. This completes the proof.
\end{proof}

Not only is this proposition a considerable generalization of
Proposition 2.10 of \cite{CHA}, but our proof is much simpler. The
principal tool is the identity $S_{\Theta}^{*} =
T_z^{*}|_{\clq_{\Theta}}$.

We have the following characterization which generalizes the
characterization of compact operators on $\clq_{\Theta}$ for $p = 1$
(see the implication (i) and (iii) in Theorem 1.2 in \cite{CHA}).

\begin{thm}\label{thm:compact-invert}
Let $\Theta \in
H^{\infty}_{\mathcal{B}{(\mathbb{C}^p})}(\mathbb{D})$ be an inner
multiplier and $T \in \mathcal{B}(\clq_{\Theta})$. Then $T$ is
compact if and only if $\left \{S_{\Theta}^{*m}TS_{\Theta}^{m}
\right\}_{m \geq 1}$ converges in norm.
\end{thm}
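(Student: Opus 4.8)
The plan is to mimic the structure of the proof of Theorem~\ref{thm:compact-invert}'s scalar predecessor, but to route everything through the two facts already established in this section: Proposition~\ref{sec1.lem3} (which says $S_\Theta^* A S_\Theta = A$ forces $A = 0$, at least when $\Theta$ is invertible a.e.) and Theorem~\ref{sec1.lem2} (the vector-valued Feintuch theorem on $H^2_{\mathbb{C}^p}(\mathbb{D})$). The key structural observation, exactly as in Theorem~\ref{thm-SVQT} and Theorem~\ref{thm-ntheta}, is that $S_\Theta^{*m} = T_z^{*m}|_{\clq_\Theta}$, so the compression dynamics on $\clq_\Theta$ are governed by the ambient dynamics on $H^2_{\mathbb{C}^p}(\mathbb{D})$.

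First I would prove the easy direction: if $T$ is compact on $\clq_\Theta$, then since $S_\Theta^{*m} \to 0$ in the strong operator topology (because $T_z^{*m} \to 0$ strongly on $H^2_{\mathbb{C}^p}(\mathbb{D})$ and $\clq_\Theta$ is coinvariant), Lemma~\ref{sec1.lem1} gives $S_\Theta^{*m} T \to 0$ in norm, hence a fortiori $S_\Theta^{*m} T S_\Theta^m \to 0$ in norm; in particular the sequence converges. For the converse, suppose $S_\Theta^{*m} T S_\Theta^m \to A$ in norm for some $A \in \clb(\clq_\Theta)$. The telescoping/triangle-inequality argument used in Theorems~\ref{th2} and~\ref{sec1.lem2} yields $S_\Theta^* A S_\Theta = A$, and then Proposition~\ref{sec1.lem3} forces $A = 0$. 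So in fact $S_\Theta^{*m} T S_\Theta^m \to 0$ in norm, and it remains to upgrade this to compactness of $T$.

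For that upgrade I would lift to $H^2_{\mathbb{C}^p}(\mathbb{D})$: writing $P = P_{\clq_\Theta}$, one has $S_\Theta^{*m} T S_\Theta^m = T_z^{*m} (T P) P T_z^m|_{\clq_\Theta}$, and using the coinvariance identities $P T_z^m P = P T_z^m$ and $P T_z^{*m} = T_z^{*m} P$ (valid because $\clq_\Theta^\perp = \Theta H^2_{\mathbb{C}^p}(\mathbb{D})$ is $(T_{z})$-invariant) one checks that $S_\Theta^{*m} T S_\Theta^m \to 0$ in $\clb(\clq_\Theta)$ is equivalent to $T_z^{*m}(TP)T_z^m \to 0$ in $\clb(H^2_{\mathbb{C}^p}(\mathbb{D}))$. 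Applying Theorem~\ref{sec1.lem2} with the roles ``$T$'' $\mapsto TP$ and ``$A$'' $\mapsto 0$ gives that $TP$ is compact on $H^2_{\mathbb{C}^p}(\mathbb{D})$; restricting, $T = (TP)|_{\clq_\Theta}$ is compact on $\clq_\Theta$, as desired.

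The main obstacle I anticipate is a hypothesis gap: Proposition~\ref{sec1.lem3} as stated requires $\Theta(e^{i\theta})$ invertible a.e., whereas Theorem~\ref{thm:compact-invert} (like its scalar source Theorem~1.2 in \cite{CHA}) is stated with no such assumption. If one wants the theorem in full generality one must avoid Proposition~\ref{sec1.lem3} and instead argue directly that $S_\Theta^{*m} T S_\Theta^m$ converging in norm already forces the limit to be $0$ and $T$ compact; here the natural route is to run the finite-rank-approximation argument of Theorem~\ref{sec1.lem2} \emph{inside} $\clq_\Theta$, replacing $Q_m = I - T_z^m T_z^{*m}$ by its compression $P Q_m|_{\clq_\Theta} = P_{\clq_\Theta} - S_\Theta^m S_\Theta^{*m}$, noting this is finite rank since $\clq_\Theta \ominus S_\Theta^m S_\Theta^{*m}\clq_\Theta \subseteq H^2_{\mathbb{C}^p}(\mathbb{D}) \ominus T_z^m T_z^{*m} H^2_{\mathbb{C}^p}(\mathbb{D})$, which is finite-dimensional ($mp$-dimensional); then $T - F_m = S_\Theta^m S_\Theta^{*m} (T - A) S_\Theta^m S_\Theta^{*m}$ with $A$ the (necessarily $0$) limit, and $\|T - F_m\| \le \|S_\Theta^{*m} T S_\Theta^m - A\| + o(1) \to 0$. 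This self-contained version sidesteps the invertibility hypothesis entirely, which is why I would present the proof this way rather than quoting Proposition~\ref{sec1.lem3}.
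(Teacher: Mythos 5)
Your first two paragraphs reproduce the paper's proof exactly: Lemma \ref{sec1.lem1} for the forward direction, the telescoping inequality to get $S_{\Theta}^{*}AS_{\Theta}=A$, Proposition \ref{sec1.lem3} to force $A=0$, the lift $S_{\Theta}^{*m}TS_{\Theta}^{m}=T_z^{*m}(TP_{\clq_{\Theta}})T_z^{m}|_{\clq_{\Theta}}$ via the coinvariance identities, and Theorem \ref{sec1.lem2} applied to $TP_{\clq_{\Theta}}$ with limit $0$. That part is correct and is the intended argument.

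The ``hypothesis gap'' that leads you to prefer the alternative presentation is not actually there, and the alternative as you describe it has a real gap of its own. First, the theorem is stated for $\cle=\mathbb{C}^p$ precisely so that the hypothesis of Proposition \ref{sec1.lem3} is automatic: an inner multiplier has $\Theta(e^{i\theta})$ isometric a.e., and an isometry of a finite-dimensional space into itself is unitary, hence invertible a.e. So you may quote Proposition \ref{sec1.lem3} with a clear conscience. Second, your ``self-contained'' variant does not show that the limit is $0$: compressing $Q_m$ to $\clq_{\Theta}$ and telescoping yields $\|(T-A)-F_m\|\le\|S_{\Theta}^{*m}TS_{\Theta}^{m}-A\|\raro 0$ (using $S_{\Theta}^{*m}AS_{\Theta}^{m}=A$), which proves only that $T-A$ is compact. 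Nothing in that approximation scheme kills $A$; you still need an independent input, either Proposition \ref{sec1.lem3} itself or the observation that $S_{\Theta}\in C_{00}$ (again a consequence of $\Theta(e^{i\theta})$ being unitary a.e., i.e.\ of $\dim\cle<\infty$), so that $S_{\Theta}^{*m}AS_{\Theta}^{m}\raro 0$ weakly while equaling $A$ for every $m$. The finite-dimensionality is doing genuine work: for infinite-dimensional $\cle$ and a constant non-unitary isometric $\Theta$, one has $\clq_{\Theta}=H^2_{(\Ran\Theta)^{\perp}}(\D)$ and $S_{\Theta}$ is a unilateral shift, so $S_{\Theta}^{*m}S_{\Theta}^{m}=I$ converges in norm although $I$ is not compact. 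So present the proof the way your first two paragraphs (and the paper) do, not the way your last paragraph proposes.
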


\begin{proof}
If $T$ is compact on $\clq_{\Theta}$, then by Lemma \ref{sec1.lem1},
$\| S_{\Theta}^{*m}TS_{\Theta}^{m} \| \rightarrow 0$ as $m
\rightarrow \infty$. To prove the converse, let $A \in
\mathcal{B}(\clq_{\Theta})$ and $S_{\Theta}^{*m} T S_{\Theta}^{m}
\raro A$, as $m \raro \infty$, in norm. Then by the same argument
used in the proof of Theorem \ref{quo.thm1}, we have
$S_{\Theta}^{*}AS_{\Theta}=A$.  It now follows  from
Proposition \ref{sec1.lem3} that $A=0$ and therefore $T_z^{*m}T P_{\clq_{\Theta}} T_z^m
\raro 0$ as $m \raro \infty$. Now Theorem
\ref{sec1.lem2} implies that $T P_{\clq_{\Theta}}$ is a compact
operator on $H^{2}_{\mathbb{C}^p }(\mathbb{D})$. Therefore $T = T
P_{\clq_{\Theta}}$ is a compact operator on $\clq_{\Theta}$. This
completes the proof.
\end{proof}

Theorem \ref{thm:compact-invert} and Lemma \ref{sec1.lem1} give us
the following generalization of Theorem 1.2 in \cite{CHA}.

\begin{thm}
Let $\Theta \in H^\infty_{\mathcal{B}(\mathbb{C}^p)}(\mathbb{D})$ be
an inner multiplier and $T \in \mathcal{B}(\clq_{\Theta})$. Then the following are
equivalent:

(i) $\{S_{\Theta}^{*m} T S_{\Theta}^m\}_{m\geq1}$ converges in norm;

(ii) $S_{\Theta}^{*m} T S_{\Theta}^m \raro 0$ in norm;

(iii) $T$ is a compact operator.
\end{thm}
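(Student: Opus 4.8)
The plan is to derive everything from the already-established Theorem~\ref{thm:compact-invert} together with Lemma~\ref{sec1.lem1}. The three statements are clearly going to be chained as (iii) $\Rightarrow$ (ii) $\Rightarrow$ (i) $\Rightarrow$ (iii); the only genuine content is wrapping the prior results cleanly, since no new estimate is required.

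First I would prove (iii) $\Rightarrow$ (ii). Suppose $T$ is compact on $\clq_{\Theta}$. The model operator $S_{\Theta} = P_{\clq_{\Theta}} T_z|_{\clq_{\Theta}}$ is a contraction, and since $S_{\Theta}^* = T_z^*|_{\clq_{\Theta}}$ and $T_z^{*m} \raro 0$ in the strong operator topology on $H^2_{\mathbb{C}^p}(\mathbb{D})$, we get $S_{\Theta}^{*m} \raro 0$ in the strong operator topology on $\clq_{\Theta}$. Applying Lemma~\ref{sec1.lem1} with $R = S_{\Theta}$ and $A = T$ (and once more to the adjoint side, or simply noting $\|S_{\Theta}^{*m} T S_{\Theta}^m\| \le \|S_{\Theta}^{*m} T\| \raro 0$), we conclude $S_{\Theta}^{*m} T S_{\Theta}^m \raro 0$ in norm, which is (ii).

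Next, (ii) $\Rightarrow$ (i) is immediate: if $S_{\Theta}^{*m} T S_{\Theta}^m \raro 0$ in norm, then in particular the sequence $\{S_{\Theta}^{*m} T S_{\Theta}^m\}_{m \ge 1}$ converges in norm (to $0$), so (i) holds. Finally, (i) $\Rightarrow$ (iii) is exactly the content of Theorem~\ref{thm:compact-invert}: if $\{S_{\Theta}^{*m} T S_{\Theta}^m\}_{m\ge1}$ converges in norm, then $T$ is compact. This closes the cycle and proves the equivalence.

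The main obstacle here is essentially nonexistent — the heavy lifting (using Theorem~\ref{thm-TVC} and Proposition~\ref{sec1.lem3} to force the limit operator $A$ to vanish, and then invoking Theorem~\ref{sec1.lem2}) has already been carried out inside the proof of Theorem~\ref{thm:compact-invert}. If anything, the one point worth stating carefully is why strong convergence $T_z^{*m} \to 0$ descends to $S_{\Theta}^{*m} \to 0$ on the quotient: this follows because $\clq_{\Theta}^\perp = \Theta H^2_{\mathbb{C}^p}(\mathbb{D})$ is $(T_{z_1},\ldots)$-invariant, hence $S_{\Theta}^{*m} x = T_z^{*m} x$ for $x \in \clq_{\Theta}$, and $\|T_z^{*m} x\| \to 0$. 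Thus the proof is just the assembly of the $(iii)\Rightarrow(ii)\Rightarrow(i)\Rightarrow(iii)$ loop with a one-line verification of the strong-operator-topology claim.
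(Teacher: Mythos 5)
Your proposal is correct and is essentially the paper's own argument: the paper obtains this theorem by combining Theorem \ref{thm:compact-invert} (which already gives the equivalence of (i) and (iii)) with Lemma \ref{sec1.lem1} (which gives (iii) $\Rightarrow$ (ii)), closing the same (iii) $\Rightarrow$ (ii) $\Rightarrow$ (i) $\Rightarrow$ (iii) cycle you describe. Your added remark on why $S_{\Theta}^{*m} \rightarrow 0$ strongly (via $S_{\Theta}^{*} = T_z^{*}|_{\clq_{\Theta}}$) is a correct and worthwhile clarification, but not a different route.
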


\vspace{0.2in}

\noindent\textit{Acknowledgement:}  We are grateful to Professor A.
Bottcher for pointing out that our Lemmaa \ref{sec1.lem1} is a
special case of 1.3 (d) in the monograph by Bottcher and Silbermann
[3]. The first author's research work is supported by NBHM Post
Doctoral Fellowship No. 2/40(50)/2015/ R \& D - II/11569. The second
author is supported in part by NBHM (National Board of Higher
Mathematics, India) grant NBHM/R.P.64/2014.

\end{document}